\newtheorem{theorem}{Theorem}[section]
\newtheorem{lemma}[theorem]{Lemma}
\newtheorem{corollary}[theorem]{Corollary}
\newtheorem{proposition}[theorem]{Proposition}
\theoremstyle{definition}
\newtheorem{definition}[theorem]{Definition}
\newtheorem{example}[theorem]{Example}
\theoremstyle{remark}
\newtheorem{remark}[theorem]{Remark}
\numberwithin{equation}{section}
\begin{document}

\title{Convergence of measures under diagonal actions on homogeneous spaces}

% Information for first author
\author{Ronggang Shi}
% Address of record for the research reported here
\address{School of Mathematical Sciences, Xiamen University, Xiamen 361005, PR China}
% Current address
%\curraddr{Department of Mathematics, Ohio State
 %University, Columbus, Ohio 43210}
 \email{ronggang@xmu.edu.cn}
% \thanks will become a 1st page footnote.
\thanks{The author was supported in part by the Fundamental Research Funds for the Central Universities(2010121004) of China.}

% General info
\subjclass[2000]{Primary 28D20; Secondary  28A33, 11K60}

\date{}

%\dedicatory{This paper is dedicated to my advisor.}

\keywords{entropy, equidistribution, Diophantine approximation}

\begin{abstract}
\noindent
Let $\lambda$ be a probability measure on $\mathbb T^{n-1}$ where $n=2$ or $3$.
Suppose $\lambda$ is invariant, ergodic and has positive entropy
with respect to
the linear transformation defined by a hyperbolic matrix.
We get a measure $\mu $ on $SL_n(\mathbb Z)\backslash SL_n(\mathbb R)$
by putting  $\lambda$ on some unstable horospherical orbit of the right
translation of $a_t=\mathrm{diag}(e^t, \ldots, e^t, e^{-(n-1)t})$ $(t>0)$.
We prove that if the average of $\mu$ with respect to the flow
$a_t$ has a limit, then
it must be a scalar  multiple of the probability Haar measure.
As an application we show that if the entropy of $\lambda$ is large,
then Dirichlet's theorem is not improvable $\lambda$ almost surely.
\end{abstract}

\maketitle

\markright{CONVERGENCE OF MEASURES}

\section{Introduction}\label{introduction}
For any positive integer $p$, let $\times p$ be the map
of $[0,1]=\mathbb Z\backslash \mathbb R$ to
itself which sends $x\mod \mathbb Z$ to $px\mod \mathbb Z$.
Let  $q$ be another positive integer such that $p$ and $q$ are
 not powers of any
integer, then
it is conjectured by Furstenburg that the only
non-atomic $\times p$ and $\times q$  ergodic probability  measure on
$[0,1]$ is the Lebesgue measure.
So far the best results known are obtained by Rudolph \cite{Ru} and Johnson \cite{Jo}
under the assumption of positive entropy with respect to
the  $\times p$ map. Using their
measure rigidity results they proved in \cite{JR95} that
the average of  the $\times q$ orbit of
 any $\times p$ ergodic  probability measure
with positive entropy converges to the Lebesgue measure.
Here and hereafter all the convergence of
measures on some locally compact topological space
$X$
are under the weak$^*$ topology of
Radon measures $\mathcal M(X)$.

Recently, the measure rigidity  on torus is generalized
 to  homogeneous spaces by Einsieder, Katok and Lindenstrauss,
see \cite{EKL}, \cite{L06} and \cite{EL08} for details.
Using these new results  we can extend
the convergence of measures in \cite{JR95} to homogeneous
spaces.

For a lattice $\Gamma$ of a locally compact group $G$,
we use $m_{\Gamma \backslash G}$ to denote the unique
probability Haar measure on $\Gamma \backslash G$ and use
$\vartheta_{\Gamma \backslash G}$ to denote
the element $\Gamma$ in $\Gamma \backslash G$.
An element $g\in G$ also stands for
the map on $\Gamma \backslash G$ which sends $x\in
\Gamma \backslash G$ to $xg$. Let $\mathbb R\to G$
be a continuous homomorphism and $a_t$ be the image of $t\in
\mathbb R$. We consider the one-parameter subgroup $\{a_t\}$
as a flow on $\Gamma \backslash G$ through right translation.
For a Radon measure
$\mu$ on
$\Gamma \backslash G$, let $a_t\mu$ be the pushforward of
$\mu$ under  the map $a_t$, that is
 $a_t\mu (C)=\mu(Ca_t^{-1})$ for any Borel measurable
 subset
$C\subset \Gamma \backslash G$.
We say  $\mu$    has non-escape of mass with respect to the
flow $a_t$, if any
limit of
\begin{equation}\label{eq:set of mea}
\left\{ {\color{red}\frac{1}{T}}\int_0^{T} a_t \mu\,
\mathrm d t: T\ge 0\right \}
\end{equation}
 is a probability measure.

We consider Euclidean space $\mathbb R^{n-1}$ as
the space of row
vectors.
For any $\mathbf s\in \mathbb R^{n-1}$,
take
\begin{equation}\label{eq:Intro:us}
u(\mathbf s)=\left(\begin{array}{cc}
\mathrm{Id}_{n-1} & 0 \\
\mathbf s & 1
\end{array}
\right)\in SL_n(\mathbb R).
  \end{equation}

\begin{theorem}\label{thm:intro2}
Let $\lambda$ be the probability Cantor measure on $[0,1]$.
 Suppose
 $a_t=\mathrm{diag}
(e^{t}, e^{-t})\in SL_2(\mathbb R)$ and
$Y=SL_2(\mathbb Z)\backslash SL_2(\mathbb R).$  Then for any $f\in C_c(Y)$,
\[
\lim_{T\to \infty}\frac{1}{T}\int_0^T
\int_0^1 f(\vartheta_Y u(s)a_t) \,
\mathrm d \lambda (s)
 \, \mathrm d t
=\int_Y f \, \mathrm dm_Y.
\]
\end{theorem}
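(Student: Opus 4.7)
The plan is to deduce Theorem~\ref{thm:intro2} from the general rigidity result announced in the abstract, together with a non-escape-of-mass estimate specific to the Cantor measure. First I would verify the hypotheses: the Cantor measure $\lambda$ is invariant and ergodic under the $\times 3$ map on $\mathbb{T}^1=[0,1]$ with positive entropy $\log 2$, so it fits the standing hypotheses with $n=2$ and the hyperbolic toral endomorphism taken to be multiplication by $3$. Let $\mu$ denote the pushforward of $\lambda$ to $Y$ under $s\mapsto\vartheta_Y u(s)$; the main theorem of the paper then asserts that every weak-$*$ subsequential limit of the averages $\frac{1}{T}\int_0^T a_t\mu\,dt$ is a constant multiple $c\,m_Y$ of Haar measure for some $c\in[0,1]$.

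What remains is to show $c=1$ for every subsequential limit, which is precisely non-escape of mass. I would use Dani's correspondence: the point $\vartheta_Y u(s)a_t$ leaves the compact set $\{x\in Y:\mathrm{sys}(x)\ge\epsilon\}$ if and only if $s$ admits an integer solution $(p,q)$, $q\neq 0$, to $|q|\le\epsilon e^t$ and $|qs-p|\le\epsilon e^{-t}$. Since the Cantor measure is absolutely friendly in the sense of Kleinbock--Lindenstrauss--Weiss, their quantitative nondivergence theorem (applied to the one-parameter family $\{u(s)a_t\}$) directly yields, for every $\delta>0$, a compact set $K\subset Y$ with
\[
\lambda\bigl\{s\in[0,1]:\vartheta_Y u(s)a_t\notin K\bigr\}<\delta
\qquad\text{for all }t\ge 0.
\]
This forces $c\ge 1-\delta$ for every $\delta>0$, hence $c=1$ for every subsequential limit. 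Since every subsequential limit therefore equals $m_Y$, the full average converges to $m_Y$, giving the stated equality on $C_c(Y)$.

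The main obstacle is the non-escape step. The general rigidity theorem alone determines any limit only up to a possibly deficient multiple of Haar, reflecting the non-compactness of $Y$ and the genuine possibility of mass leaking into the cusp along the orbit $\vartheta_Y u(s)a_t$. Ruling this out requires concrete Diophantine information about how the Cantor measure distributes near rationals at every scale $e^{-t}$, which is exactly what friendliness (a uniform power-law decay plus a decay-near-hyperplanes estimate) supplies through the KLW nondivergence bound. Verifying the hypotheses of that external input, and in particular that the Cantor measure meets the friendliness conditions uniformly in $t$, is the delicate point; once it is in hand, the reduction to the main theorem is immediate.
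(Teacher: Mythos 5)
Your second step (non-escape of mass) is correct and is essentially what the paper does: the Cantor measure is friendly, and the Kleinbock--Lindenstrauss--Weiss nondivergence theorem gives a compact set $K\subset Y$ with $\lambda\{s:\vartheta_Y u(s)a_t\notin K\}<\delta$ uniformly in $t$, so every subsequential limit is a probability measure. The gap is in your first step. You assert that ``the main theorem of the paper'' applies to the pushforward $\mu$ of $\lambda$ on $Y$ with ``$n=2$ and the hyperbolic toral endomorphism taken to be multiplication by $3$.'' No theorem of the paper applies directly on $Y$. Theorem \ref{thm:intronew} requires a hyperbolic \emph{automorphism} $\gamma\in SL_2(\mathbb Z)$ of $\mathbb T^2$ (i.e.\ $n=3$), which is realized as the right translation by $g_\gamma$ on $SL_3(\mathbb Z)\backslash SL_3(\mathbb R)$. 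The $\times 3$ map on $\mathbb T^1$ is a noninvertible endomorphism and is \emph{not} realized as a right translation on $Y$: since $u(s)a_t=a_t\,u(e^{2t}s)$, realizing $s\mapsto 3s$ would require translating by $\mathrm{diag}(\sqrt3,1/\sqrt3)$, which does not lie in $SL_2(\mathbb Z)$, so $\vartheta_Y u(s)\mapsto\vartheta_Y u(3s)$ is not a restriction of the right $SL_2(\mathbb R)$-action. Consequently $\mu$ is invariant under no class-$\mathscr A$ element of $SL_2(\mathbb R)$, Theorem \ref{thm:main} cannot be invoked on $Y$, and Lindenstrauss's rigidity theorem is unavailable there in any case (it needs recurrence under a nontrivial second factor $L$; on $Y$ alone there are plenty of $a_t$-invariant positive-entropy measures that are not Haar). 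The only statement of the required form on $Y$ is Theorem \ref{thm:equi:sl2}, which is precisely the general version of the theorem you are proving, so citing it would be circular; the abstract's announcement for $n=2$ is shorthand for that theorem, not an independent input.

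What the missing step requires, and what the paper does in Section \ref{sec:example} following Einsiedler--Fishman--Shapira, is a lift to the $S$-arithmetic cover $X=PGL_2(\mathbb Z[1/3])\backslash\bigl(PGL_2(\mathbb R)\times PGL_2(\mathbb Q_3)\bigr)$, where $\times 3$ does become the right translation by $a=\mathrm{diag}(3,1)^{2}$. Even there the naive lift $\mu_1$ of $\lambda$ is not $a$-invariant; Lemma \ref{lem:key} produces an $a$-invariant \emph{ergodic} probability lift with positive entropy (using that $\vartheta_X u(\mathbb Q_S)$ is compact, so averaging along powers of $a$ loses no mass), Lemma \ref{lem:entropyto contribution} converts positive entropy into positive entropy contribution along the real unipotent $U$, Theorem \ref{thm:main} then shows every ergodic component of any subsequential limit of the $a_t$-averages has positive entropy, and Lindenstrauss's theorem (with $L$-recurrence supplied by the $3$-adic factor) identifies the limit as a multiple of Haar; finally one pushes back down to $Y$ through the compact-fibered factor map $\eta$. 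Your proposal replaces all of this with an appeal to a theorem that does not exist in the form you need, so the argument as written does not go through.
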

The above theorem strengthens Theorem 1.6 of \cite{EFS}
where it is proved that for $\lambda$ almost every ${\color{red}s\in [0,1]}$ the
trajectory $\vartheta _Y u(s)a_t$ with $t\in \mathbb R_{\ge 0}$ is dense.
 A general version
of Theorem \ref{thm:intro2}  will be proved in Section \ref{sec:example}.
The idea  is to lift the flow $a_t$ and the measure on $Y$
to an irreducible  quotient of product of real and $p$-adic Lie groups
 which follows \cite{EFS}, then we prove
 equidistribution  there.

 To state this result, we need to introduce
some notation.
Let $S=\{\infty, p_1, \cdots, p_l\}$ where $p_i$ is prime and $l>0$.
Take
 $\mathbb Q_S=\prod_{v\in S}\mathbb Q_v$
 where $\mathbb Q_v$ is the completion of $\mathbb Q$
 at the place $v$
  and
integer $n=p_1^{\sigma_1}\cdots p_l^{\sigma_l}$ where $\sigma_i>0$.
For $\mathbf s\in \mathbb Q_S$, we define
$u(\mathbf s)\in SL_2(\mathbb Q_S)$ as in (\ref{eq:Intro:us}). 
In this paper we consider elements of $GL_2$ acts
on quotients of $PGL_2$ through the natural projection map and
use elements of $GL_2$ to represent their image in $PGL_2$.

\begin{theorem}\label{thm:intro0}
Let $\mu$ be a probability measure on
\[
X=PGL_2(\mathbb Z[1/p_1,\ldots, 1/p_l])\backslash PGL_2(\mathbb Q_S)
\]
supported
on
$
\vartheta_X u(\mathbb Q_S)
$.
Suppose  $\mu$ is
invariant,  ergodic and has positive entropoy
  with respect to the
right translation of $\mathrm{diag}(n,1)^{l+1}$.
If $\mu$ has non-escape of mass with respect
to the flow $a_t=\mathrm{diag}(e^t,e^{-t})$ of the $PGL_2(\mathbb R)$
factor, then
$
\lim_{T\to \infty}\frac{1}{T}\int_0^T
 a_t \mu
 \, \mathrm d t
$
is an ${\color{red}PGL_2(\mathbb R)^\circ}$ invariant homogeneous measure.
\end{theorem}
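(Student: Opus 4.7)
The plan is to take a weak-$*$ limit $\nu$ of the averages $\nu_T := \tfrac{1}{T}\int_0^T a_t\mu\,dt$, use the non-escape of mass hypothesis to ensure $\nu$ remains a probability measure, and then combine entropy preservation with the $S$-arithmetic measure classification of Einsiedler--Lindenstrauss to pin down $\nu$.

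First I would establish the invariance properties of $\nu$. Since $a_t$ commutes with $d := \mathrm{diag}(n,1)^{l+1}$ (both lie in the common diagonal torus and $a_t$ is trivial at the non-archimedean places), every $a_t\mu$ is $d$-invariant, so the limit $\nu$ is simultaneously $a_t$-invariant and $d$-invariant. Moreover, each translate $a_t\mu$ has the same $d$-entropy as $\mu$ by commutation. Concavity of entropy for the Ces\`aro averages, together with the upper semi-continuity of entropy available in the non-compact setting under non-escape of mass (in the spirit of Einsiedler--Lindenstrauss--Michel), then yields $h_\nu(d) \geq h_\mu(d) > 0$.

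With $|S| = l+1 \geq 2$, the invariance group of $\nu$ projects non-trivially to the diagonal at every place, so the joint action of $a_\mathbb{R}$ together with $\langle d\rangle$ is a higher-rank diagonalisable action on $X$. Positive $d$-entropy then triggers the $S$-arithmetic analogue of the Einsiedler--Katok--Lindenstrauss measure classification applied to $PGL_2(\mathbb{Q}_S)$. Each ergodic component of $\nu$ is therefore algebraic, supported on a closed orbit of a connected subgroup $L \leq PGL_2(\mathbb{Q}_S)$ containing the diagonal together with the unipotent directions forced by the entropy.

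It remains to identify $L$. Because $\mu$ is supported on $\vartheta_X u(\mathbb{Q}_S)$ and $a_t$ expands $u(\mathbb{R})$, the positive-entropy contribution at the real place, together with the support condition on $\mu$, should force $L$ to contain the unstable unipotent $u(\mathbb{R})$ of $a_t$; combining this with $a_\mathbb{R}$-invariance and Mautner's phenomenon in $PGL_2(\mathbb{R})$, one deduces $PGL_2(\mathbb{R})^\circ \subseteq L$, so that $\nu$ is a $PGL_2(\mathbb{R})^\circ$-invariant homogeneous measure. The main obstacles, as is typical in this circle of ideas, are the entropy bookkeeping under non-escape of mass and the subsequent conditional-entropy argument extracting unipotent invariance at the real place from the entropy of the joint-place element $d$---this is where one must exploit the specific structure of $\mu$ as carried by $\vartheta_X u(\mathbb{Q}_S)$ rather than on a generic $d$-invariant set.
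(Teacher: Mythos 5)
There is a genuine gap, and it sits exactly where the paper has to work hardest. Your step ``concavity of entropy for the Ces\`aro averages, together with upper semi-continuity \dots yields $h_\nu(d)\ge h_\mu(d)>0$'' produces, at best, a statement about the \emph{average} entropy of the limit measure $\nu$. But the measure classification you then want to invoke --- whether Lindenstrauss's Theorem 1.1 (the tool actually applicable to $PGL_2(\mathbb Q_S)$, a product of rank-one groups, rather than an ``$S$-arithmetic EKL'' for higher-rank groups) or any of its relatives --- requires that \emph{almost every ergodic component} of $\nu$ with respect to the real flow $a_t$ have positive entropy. A bound $h_\nu(d)>0$ is compatible with a positive-measure family of zero-entropy ergodic components, which the classification theorems say nothing about. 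Closing this gap is precisely the content of the paper's Theorem \ref{thm:main}: by a Johnson--Rudolph style argument with conditional and leaf-wise measures, it shows that if the limit of $\frac{1}{T_n}\int_0^{T_n}a_t\mu\,\mathrm dt$ is nonzero then the entropy contribution $h_\nu(a,U)(x)$ along the real unipotent $U$ is positive for $\nu$-a.e.\ $x$, hence a.e.\ ergodic component under $a_t$ has positive entropy. This occupies all of Section 3 and cannot be replaced by an appeal to semicontinuity.

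A second, related gap: you need positive entropy contribution of $\mu$ along the \emph{real} unipotent $U=\{u(s):s\in\mathbb R\}$, not merely $h_\mu(d)>0$; a priori the entropy of $d$ could be carried entirely by the $p$-adic unipotent directions, in which case the $a_t$-ergodic components of $\nu$ could all have zero entropy. You flag this at the end (``should force \dots'') but do not resolve it. The paper resolves it in Lemma \ref{lem:entropyto contribution}: the support hypothesis puts $\mu$ on the compact $a$-invariant solenoid $\Gamma V\cong(V\cap\Gamma)\backslash V$, on which $a=\mathrm{diag}(n,1)^{l+1}$ has exactly one expanding foliation, namely the $U$-orbits, so $h_\mu(a)>0$ forces $h_\mu(a,U)>0$. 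With these two ingredients in place, the remaining steps (recurrence under the finite-places factor from invariance under the class $\mathscr A$ element, Lindenstrauss's theorem to get $H$-invariance, and unique ergodicity of $H$ on the irreducible quotient as in Lemma \ref{prop:uniq ergo} and Corollary \ref{cor:limit}) do go through essentially as you sketch.
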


Theorem \ref{thm:intro0} is based on the Theorem
\ref{thm:main}, which shows that  all the ergodic components
of the limit of (\ref{eq:set of mea})
with respect to the flow $a_t$
have positive entropy. Then we apply
measure rigidity in Lindenstrauss \cite{L06}.
In fact Theorem
\ref{thm:main} is a general result about entropy and can also
be  applied to the homogeneous space
$Z=SL_3(\mathbb Z)\backslash SL_3(\mathbb R)$ even in the
case that  there is some mass left.

 Elements
of $SL(2, \mathbb Z)$ acts naturally on
 $\mathbb T^2=
\mathbb Z^2\backslash \mathbb R^2$
 from right by
matrix multiplications.
%The composite $\pi u$ where $u$ is the
%map in (\ref{eq:Intro:us})  is a
%well defined map from $\mathbb T^2 \to Z$.
We can put a probability measure $\lambda$ of $\mathbb T^2$
on $Z$ to get a measure $\mu$ in the following way: for any $f\in C_c(Z)$,
\begin{equation}\label{eq:sl3induced}
\int_{\mathbb T^2}f(\vartheta_Z
 u(\mathbf s))\, \mathrm d \lambda(\mathbf s)
=\int_Z f\, \mathrm d\mu.
\end{equation}

\begin{theorem}\label{thm:intronew}
Let $\gamma\in SL_2(\mathbb Z)$ be a hyperbolic matrix and let
 $\lambda$ be an $\gamma$-invariant
and ergodic probability measure on $\mathbb T^2$ with positive entropy.
Suppose $a_t=\mathrm{diag}(e^t, e^t, e^{-2t})$ and $\mu$ is the measure
on $Z$ induced from $\lambda$ by (\ref{eq:sl3induced}).
If for a sequence of real numbers $(T_n)$ with $T_n\to \infty$,
 the limit of
 \begin{equation}\label{eq:in intronew}
\mu_n=\frac{1}{T_n}\int_0^{T_n}
a_t \mu
 \, \mathrm d t
\end{equation}
exists, then $\lim \mu_n=cm_Z$ where $0\le c\le  1$.
\end{theorem}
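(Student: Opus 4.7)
The strategy is to upgrade the $\gamma$-symmetry of $\lambda$ into a discrete diagonal symmetry of $\nu:=\lim\mu_n$, combine this with the automatic $a_t$-invariance to obtain invariance under a rank-two $\mathbb R$-diagonalizable abelian subgroup of $SL_3(\mathbb R)$, and then apply the measure classification of Einsiedler--Katok--Lindenstrauss, feeding in positive entropy from Theorem \ref{thm:main}.

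First I would set $\tilde\gamma:=\mathrm{diag}(\gamma,1)\in SL_3(\mathbb Z)$. A direct matrix computation gives $u(\mathbf s)\tilde\gamma=\tilde\gamma\, u(\mathbf s\gamma)$, so on the orbit $\vartheta_Z u(\mathbb T^2)$ right translation by $\tilde\gamma$ realizes the $\gamma$-action on $\mathbb T^2$ (the leading $\tilde\gamma$ is absorbed by $\vartheta_Z$ because $\tilde\gamma\in SL_3(\mathbb Z)$). Since $\lambda$ is $\gamma$-invariant, this forces $\mu$ to be right-$\tilde\gamma$-invariant, and because $\tilde\gamma$ commutes with $a_t$, so is every $\mu_n$ and hence $\nu$. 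Combined with the standard Cesaro-based $a_t$-invariance of $\nu$, we obtain invariance under the abelian subgroup $A:=\langle a_t,\tilde\gamma\rangle$. Hyperbolicity of $\gamma$ makes $\tilde\gamma$ $\mathbb R$-diagonalizable with eigenvalues $\alpha,\alpha^{-1},1$ ($|\alpha|>1$), and the characters $(1,1,-2)$ and $(\log\alpha,-\log\alpha,0)$ of $a_t$ and $\tilde\gamma$ on the Cartan are linearly independent; so (up to conjugation) $A$ is a rank-two subgroup of a maximal $\mathbb R$-split torus of $SL_3(\mathbb R)$.

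Next, I would invoke Theorem \ref{thm:main} to conclude that every $a_t$-ergodic component of $\nu$ has positive $a_t$-entropy. Disintegrating $\nu$ (after normalizing by the total mass, if some has escaped) into $A$-ergodic components, each component inherits positive $a_t$-entropy via its further $a_t$-decomposition. At this point I would apply the measure classification of Einsiedler--Katok--Lindenstrauss \cite{EKL}: on $Z=SL_3(\mathbb Z)\backslash SL_3(\mathbb R)$, any $A$-invariant, $A$-ergodic probability measure with positive entropy for a regular element of $A$ must be $m_Z$ itself. Regularity of $a_t$ inside $A$ is automatic from hyperbolicity of $\gamma$ (all three characters of $A$ are pairwise distinct). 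Integrating over ergodic components yields $\nu=c\,m_Z$, with $c\in[0,1]$ because each $\mu_n$ is a probability measure.

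The main obstacle will be the rigidity step. To match the hypotheses of \cite{EKL} one must (i) verify that $a_t$ lies in a non-singular direction of $A$ so the classification directly excludes the possibility that some $A$-ergodic component is Haar on a proper closed orbit of an intermediate reductive subgroup, and (ii) treat escape of mass carefully so that the classification theorem, stated for probability measures, applies to the normalizations of the $A$-ergodic components. Both issues are resolved using the hyperbolicity of $\gamma$ together with the support constraint $\mathrm{supp}(\mu)\subset\vartheta_Z u(\mathbb T^2)$, which restricts the admissible unipotent directions transverse to $A$; but together they form the technical heart of the proof.
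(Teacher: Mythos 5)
Your setup is right and matches the paper: $\mu$ (hence $\nu=\lim\mu_n$) is invariant under $g_\gamma=\mathrm{diag}(\gamma,1)$ because $u(\mathbf s)g_\gamma=g_\gamma u(\mathbf s\gamma)$, and Theorem \ref{thm:main} (via the analogue of Lemma \ref{lem:entropyto contribution}) supplies positive entropy contribution along a one-dimensional unipotent direction for the limit measure. But the rigidity step has a genuine gap. The group you call $A=\langle a_t,\tilde\gamma\rangle$ is isomorphic to $\mathbb R\times\mathbb Z$: it is a closed, \emph{proper} (cocompact, non-dense) subgroup of the two-dimensional maximal $\mathbb R$-split torus. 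The Einsiedler--Katok--Lindenstrauss classification, as stated and as used in the paper (Theorem \ref{thm:EKL}), requires invariance and ergodicity under the \emph{full} Cartan subgroup; invariance under an $\mathbb R\times\mathbb Z$ subgroup does not imply invariance under the full torus, so you cannot apply the classification directly to the $\langle a_t,\tilde\gamma\rangle$-ergodic components of $\nu$. This is exactly the point the paper has to work around, and it does so with two ideas you are missing: (1) take a typical $\Lambda$-ergodic component $\nu_1$ (where $\Lambda=\langle a_t,g_\gamma\rangle$) and average it over $\Lambda\backslash A$ to produce a genuinely $A$-invariant measure $\nu_2$ whose $A$-ergodic components still have positive entropy for $g_\gamma$; EKL then gives $\nu_2=m_Z$; (2) descend from $\nu_2$ back to $\nu_1$ via entropy: $\int_{\Lambda\backslash A}h_{a\nu_1}(g_\gamma)\,\mathrm dm=h_{m_Z}(g_\gamma)$ and $h_{a\nu_1}(g_\gamma)=h_{\nu_1}(g_\gamma)$ force $h_{\nu_1}(g_\gamma)=h_{m_Z}(g_\gamma)$, and the uniqueness of the measure of maximal entropy (Section 9 of \cite{MT}) then yields $\nu_1=m_Z$. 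Without this averaging-plus-maximal-entropy descent, your argument does not close.

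Two further inaccuracies in your "obstacles" paragraph. First, $a_t=\mathrm{diag}(e^t,e^t,e^{-2t})$ is a \emph{singular} element of the Cartan (repeated eigenvalue $e^t$), so regularity of $a_t$ is not "automatic from hyperbolicity of $\gamma$"; the regular element the paper actually uses for the entropy computation is $g_\gamma$, whose eigenvalues $\alpha,\alpha^{-1},1$ are pairwise distinct. Second, the issues you flag as the technical heart (intermediate algebraic measures, normalizing after escape of mass) are not the real difficulty here: on $SL_3(\mathbb Z)\backslash SL_3(\mathbb R)$ the EKL theorem already identifies $m_Z$ as the unique positive-entropy $A$-ergodic measure, and escape of mass is absorbed into the constant $c$. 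The genuine difficulty is the one described above.
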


We remark here that a large class of measures on $[0,1]^2$
such as friendly measures of \cite{KLW} satisfy
$c=1$. Also by a theorem of
\cite{EK}, if the entropy of $\lambda$ with respect to
$\gamma$ is large, then $c>0$.
The equidistribution result
of Theorem \ref{thm:intronew} has the following number theoretical
applications.

\begin{theorem}\label{thm:main diophantine}
Let $\gamma\in SL_2(\mathbb Z)$ be a hyperbolic matrix.
Suppose $\lambda$ is a $\gamma$-invariant and ergodic
 probability measure on $\mathbb T^2$ such that
 $h_\lambda(\gamma)=c\,h_m(\gamma)$ where $m$ is
 the Lebesgue measure and $c>\frac{2}{3}$, then Dirichlet's
 theorem can not be improved
 (see Section \ref{sec:Diophantine}) $\lambda$ almost surely.
\end{theorem}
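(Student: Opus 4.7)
The plan is to deduce the Diophantine conclusion by showing that, under the entropy hypothesis, for $\lambda$-almost every $\mathbf{s}$ the trajectory $\vartheta_Z u(\mathbf{s}) a_t$ equidistributes to the Haar measure $m_Z$ on $Z$, and then invoking the Dani--Kleinbock--Weiss correspondence to translate pointwise equidistribution into non-improvability of Dirichlet's theorem at $\mathbf{s}$.

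Let $\mu$ be the probability measure on $Z$ induced by $\lambda$ via (\ref{eq:sl3induced}). The entropy hypothesis $h_\lambda(\gamma)>\tfrac23 h_m(\gamma)$, combined with the Einsiedler--Kadyrov entropy/escape-of-mass bound quoted in the remark after Theorem \ref{thm:intronew}, ensures that every weak-$^*$ limit of the averages $\mu_n$ in (\ref{eq:in intronew}) is a probability measure on $Z$. Theorem \ref{thm:intronew} then identifies every such limit as $\alpha\,m_Z$ with $\alpha\in[0,1]$; non-escape of mass forces $\alpha=1$, so $\mu_n\to m_Z$.

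Next I would recall the Dani--Kleinbock--Weiss correspondence of Section \ref{sec:Diophantine}: Dirichlet's theorem is $\epsilon$-improvable at $\mathbf{s}$ iff the trajectory $\vartheta_Z u(\mathbf{s}) a_t$ eventually lies in the closed cusp set $K_\epsilon = \{x\in Z:x\text{ has a nonzero lattice vector of sup-norm}\leq\epsilon\}$; the open complement $L_\epsilon = Z\setminus K_\epsilon$ has $m_Z(L_\epsilon)>0$ for every $\epsilon<1$. Embedding $\gamma$ into $SL_3(\mathbb{Z})$ via $g=\mathrm{diag}(\gamma,1)$, the identities $u(\mathbf{s}\gamma)=g^{-1}u(\mathbf{s})g$ and $ga_t=a_tg$ yield
\[
\vartheta_Z u(\mathbf{s}\gamma) a_t = \vartheta_Z u(\mathbf{s}) a_t \cdot g,
\]
so the trajectory of $\mathbf{s}\gamma$ is the right-translate of that of $\mathbf{s}$ by the fixed matrix $g$, and in particular $\mu$ is right-$g$-invariant. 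Combining this $\gamma$-equivariance with the $\gamma$-ergodicity of $\lambda$ and the averaged equidistribution $\mu_n\to m_Z$, the aim is to upgrade to pointwise equidistribution: for $\lambda$-a.e. $\mathbf{s}$, the trajectory $\vartheta_Z u(\mathbf{s}) a_t$ equidistributes to $m_Z$ as $t\to\infty$. Such trajectories visit every open set of positive Haar measure with positive frequency---in particular $L_\epsilon$ for every $\epsilon<1$---so Dirichlet's theorem is not improvable at $\mathbf{s}$.

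The main obstacle is precisely this upgrade from averaged to pointwise equidistribution: weak-$^*$ convergence of $\mu_n$ alone yields only the bound $\lambda(B_\epsilon)\leq m_Z(K_\epsilon)$ (with $B_\epsilon$ the $\epsilon$-improvable set), which is informative only for $\epsilon$ bounded away from $1$. The approach of \cite{EFS} used for the rank-one analogue (Theorem \ref{thm:intro2}), where $\lambda$-a.e. trajectory is shown to be dense, suggests that the joint $\gamma$-action on $\mathbb{T}^2$ and $g$-action on $Z$, together with the measure-rigidity results of \cite{L06,EL08} that underpin Theorem \ref{thm:intronew}, supply the extra structure needed to force $\lambda$-a.e. pointwise equidistribution. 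The technical verification of this upgrade---essentially a Birkhoff-type argument for the joint $(\gamma,a_t)$-action made possible by the commutativity $ga_t=a_tg$---is the heart of the proof.
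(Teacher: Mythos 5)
Your proposal has two genuine gaps, one of which is a factual error about what the hypothesis $c>\frac{2}{3}$ buys. You assert that the Einsiedler--Kadyrov bound ``ensures that every weak-$^*$ limit of the averages $\mu_n$ is a probability measure.'' It does not: Theorem \ref{thm:EK} with $d=2c$ (via Proposition \ref{prop:exact dime}) gives only $\nu(Z)\ge \frac{3}{2}(2c-\frac{4}{3})=3c-2$, which for $\frac{2}{3}<c<1$ is strictly between $0$ and $1$. So you cannot conclude $\mu_n\to m_Z$; you only get $\mu_n\to \alpha\, m_Z$ with $\alpha\ge 3c-2>0$ by Theorem \ref{thm:intronew}. (The remark after Theorem \ref{thm:main diophantine} in the introduction makes clear that $\frac{2}{3}$ is exactly the threshold for \emph{nonvanishing} of the limit, not for full mass.) The second gap is the one you flag yourself: the upgrade from averaged to pointwise equidistribution for $\lambda$-a.e.\ $\mathbf s$ is left entirely unproven, and it is not a routine Birkhoff-type verification --- no such pointwise statement is established in the paper, and it is not available from the measure-rigidity inputs used here.

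The paper avoids both issues via Theorem \ref{thm:DI}, which derives non-improvability from the much weaker hypothesis that the averaged limit is merely \emph{nonzero}. The mechanism is a restriction argument rather than a pointwise one: if $\lambda(DI_\sigma)>0$, pass to a subsequence so that the averages of $(a_t\pi u)\lambda|_{DI_\sigma}$ converge to some $\nu_1\le\nu$; the correspondence shows $\nu_1$ vanishes on the open set $K_{\sqrt\sigma}$, a neighborhood of $\mathbb Z^3$. Since Theorem \ref{thm:intronew} forces $\nu=\alpha\, m_Z$, the $a_t$-invariant measure $g_\gamma\nu_1$ is dominated by a multiple of $m_Z$ and vanishes on an open set, hence is zero; $\gamma$-ergodicity of $\lambda$ then propagates this to $\nu=0$, contradicting $\nu(Z)\ge 3c-2>0$. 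Your equivariance identity $\vartheta_Z u(\mathbf s\gamma)a_t=\vartheta_Z u(\mathbf s)a_t\, g_\gamma$ is correct and is indeed the ingredient the paper uses to bring in $\gamma$-ergodicity, but it should feed into this contradiction scheme, not into a pointwise equidistribution claim. To repair your write-up: drop the non-escape-of-mass claim, keep only $\nu\ne 0$ from Theorem \ref{thm:EK}, and replace the pointwise upgrade by the restriction-to-$DI_\sigma$ argument above.
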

\begin{remark}
Here the number $\frac{2}{3}$ comes from the mass estimate
in \cite{EK}. For $c\le \frac{2}{3}$ we don't know whether
there is always a sequence $(T_n)$ so that the limit
in Theorem \ref{thm:intronew} is nonzero.
\end{remark}

\vspace{0.5cm}
\noindent \textbf{Acknowledgements}
The author would like to thank  Manfred Einsiedler for
 many helpful conversations.

\section{Preliminaries: Notation and leaf-wise measures}
\label{sec:leafmeasure}

Let $L$ be a product of semisimple  algebraic groups, that is
$L=G_1\times\cdots \times G_l$ where $G_i$ is a semisimple
linear
algebraic group over a local field of characteristic zero.
Let $G=SL_n(\mathbb R)\times L$ where $L$  could
be  trivial  and $n\ge 2$.
Here are some conventions that will be used throughout the paper.
 We use $\mathrm {Id}$ to denote
the identity element of various groups according to the context.
We consider $SL_n(\mathbb R)$,  $G_i$ and their products
 also as the corresponding
subgroups of $G$, and they are called factors of $G$.

\begin{comment}
General elements of $G$ is usually denoted
by $g\times h$ where $g\in SL_n(\mathbb R)$ and $h\in L$
unless otherwise specified.
\end{comment}

Suppose a factor group $H$  of $G$ is  an algebraic group over
the local field $k$ with a fixed  absolute value. Let
$g\in H$ be a $k$-split semisimple element such that
the adjoint action $\mathrm{Ad}_{g}$ of $g$ on the
Lie algebra of $H$ satisfies
\begin{itemize}
\item $1$ is the only eigenvalue of norm $1$;
\item No two different eigenvalues have the same norm.
\end{itemize}
Then following Margulis and Tomanov \cite{MT}
we say $g$ is from the class $\mathscr A$.
We remark here that our concept  of
class $\mathscr A$ element is a little bit different
from that of \cite{MT} but follows that of \cite{EL081}.
We say
$a\in G$ is an element from the class $\mathscr A$ if
all of its components
 are from the class $\mathscr A$.

\begin{comment}
For a place $v$ of $\mathbb Q$ we use
$\mathbb Q_v$ to denote the completion
of $\mathbb Q$ in the place $v$ and we use
 $|\cdot|_v$  to denote the normalized norm on $Q_v$, that is,
if $v=\infty$ then $|\cdot|_v$ is the absolute value  of
real numbers and we may omit the subscript $v$ in this case; if $v=p$ then $|p|_v=\frac{1}{p}$.
Let $S$ (possibly empty)be a finite set of places of $\mathbb Q$ and
 $\mathbb A_S=\prod_{v\in S}\mathbb Q_v$.
 Here we allow that the same place appears in $S$ more than once.
 Let
$SL_2(\mathbb A_S)=\prod_{v\in S}SL_2(\mathbb Q_v)$ be the $\mathbb A_S$ points of the
algebraic group $SL_2$. The homogeneous space $X$
 is the left quotient of $G=SL_2(\mathbb R)\times SL_2(\mathbb A_S)$
 by  some discrete subgroup $\Gamma$.
\end{comment}

We fix a non-neural  element $a\in G$ from the class
$\mathcal A$. The unstable horospherical subgroup of $a$ is
\[
G^+=\{g\in G
:a^{-n} g a^{n}\to e \mbox{ as } n\to -\infty\}.
\]
We say a subgroup $U$ of $G$ is a one-dimensional unipotent
subgroup if $U$ is an algebraic subgroup of a factor of $G$
and $U$ is isomorphic to the base field
considered as an additive algebraic group.
Let $U$ be a one dimensional  subgroup of $G^+$ normalized by $a$.
The norm on $k$ induce a metric $d^U$ on $U$.
   For $u\in U$, let $B_r^U(u)$ or $B_r^U$ if
$u$ is the identity stand for the
 ball of radius $r$ in $U$ centered at $u$.

Let $X$ be a locally compact topological space and $G$ acts on $X$
from right.
Next we
review leaf-wise measures along $U$ foliations
for  an arbitrary $a$-invariant finite positive measure $\nu$
on $X$.
More details can be found in \cite{L06} and \cite{EL081}.
Let $\mathcal M(U)$ be the set of Radon measures on $U$.
There is a measurable map $X \to \mathcal M(U)$ and the
image of
 $x\in X$ is denoted by
$\nu_x^U$.
The measure $\nu_x^U$
 is  called leaf-wise measure along
$U$ foliations and is normalized so
that $\nu_x^U(B_1^U)=1$.
 The entropy contribution of $U$ with respect to $\nu$
 is an $a$-invariant measurable function
on $X$ defined by
\begin{equation}
 h_\nu(a,U)(x)=\lim_{n\to \infty}
\frac{\log\nu_x^U(a^{-n}B_1^Ua^n)}{n}.
\end{equation}
Zero entropy contribution is characterized as follows:
\begin{theorem}[Theorem 7.6 of \cite{EL081}]\label{thm:zero entropy}
For $\nu$ almost every $x\in X$, $h_\nu(a,U)(x)=0$  if and only if $\nu_x^U$ is trivial,
that is an atomic measure concentrated on the identity.
\end{theorem}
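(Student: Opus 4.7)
The implication $\nu_x^U=\delta_e\Rightarrow h_\nu(a,U)(x)=0$ is immediate from the definition, since $\delta_e(a^{-n}B_1^U a^n)=1$ for every $n$. For the converse, my plan is to exploit the $a$-equivariance of the leafwise measures together with Birkhoff's pointwise ergodic theorem. Let $\Phi:U\to U$ denote conjugation $\Phi(u)=aua^{-1}$; since $U$ is one-dimensional and lies in $G^+$, $\Phi$ is a uniform contraction by some factor $0<\lambda<1$, so $\Phi^n(B_r^U)=B_{\lambda^n r}^U$ and in particular $a^{-n}B_1^U a^n=\Phi^{-n}(B_1^U)=B_{\lambda^{-n}}^U$. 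The equivariance statement (cf.\ Section 6 of \cite{EL081}) provides, for $\nu$-a.e.\ $x$, a positive measurable $c(x)$ such that
\[
\nu_{xa}^U \;=\; c(x)\cdot\Phi_*\,\nu_x^U
\]
up to restriction to any fixed relatively compact subset of $U$. Imposing the normalization $\nu_y^U(B_1^U)=1$ on both sides pins down $c(x) = 1/\nu_x^U(B_{\lambda^{-1}}^U)$, and iterating produces the cocycle
\[
c_n(x) := c(x)c(xa)\cdots c(xa^{n-1}) \;=\; 1/\nu_x^U(B_{\lambda^{-n}}^U),
\]
so that $h_\nu(a,U)(x) = -\lim_{n\to\infty} n^{-1}\log c_n(x)$.

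Set $f(x):=\log c(x)=-\log\nu_x^U(B_{\lambda^{-1}}^U)$. Since $B_1^U\subset B_{\lambda^{-1}}^U$, we have $\nu_x^U(B_{\lambda^{-1}}^U)\ge 1$, hence $f\le 0$. The cocycle identity gives $\log c_n(x)=\sum_{i=0}^{n-1} f(xa^i)$, so by Birkhoff's theorem applied within each ergodic component of $\nu$ under $a$,
\[
h_\nu(a,U)(x) \;=\; -E(f\mid\mathcal{I})(x) \;=\; E\bigl(\log\nu_y^U(B_{\lambda^{-1}}^U)\,\big|\,\mathcal{I}\bigr)(x)\;\ge\;0,
\]
where $\mathcal{I}$ is the $a$-invariant $\sigma$-algebra. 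In particular, on the (a.e.\ defined) set $\{h_\nu(a,U)=0\}$, the ergodic decomposition forces $\nu_y^U(B_{\lambda^{-1}}^U)=1$ for $\nu$-a.e.\ $y$ in the corresponding ergodic components, i.e., $\mathrm{supp}(\nu_y^U)\subset B_{\lambda^{-1}}^U$ for those $y$.

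The final step is to bootstrap this support estimate using $a$-equivariance once more. For $\nu$-a.e.\ such $y$ and every $n\ge 0$, the support bound also applies at the point $ya^{-n}$ (by $a$-invariance of $\nu$), and combined with $\mathrm{supp}(\nu_{ya^{-n}}^U)=\Phi^{-n}(\mathrm{supp}(\nu_y^U))$ this yields
\[
\mathrm{supp}(\nu_y^U)\;\subset\;\Phi^n(B_{\lambda^{-1}}^U)\;=\;B_{\lambda^{n-1}}^U.
\]
Letting $n\to\infty$ forces $\mathrm{supp}(\nu_y^U)\subset\{e\}$, which together with $\nu_y^U(B_1^U)=1$ gives $\nu_y^U=\delta_e$. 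The main technical input, which I take on faith from \cite{L06} and \cite{EL081}, is the precise form of the $a$-equivariance identity with the explicit cocycle $c(x)$ controlled by the mass of $\nu_x^U$ on a fixed ball; granted that, the argument reduces to a clean combination of Birkhoff's ergodic theorem and the support bootstrap, and the minor integrability issue for $f$ (which is bounded above by $0$ but possibly unbounded below) is handled by the one-sided form of Birkhoff's theorem.
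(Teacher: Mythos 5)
The paper does not actually prove this statement: it is quoted verbatim as Theorem 7.6 of \cite{EL081} and used as a black box, so there is no in-paper argument to compare against. Your reconstruction is essentially the standard proof of that result from \cite{EL081}: write $\log\nu_x^U(a^{-n}B_1^Ua^n)$ as an additive cocycle over the $a$-action via the equivariance of leafwise measures, identify the entropy contribution with a conditional expectation of a one-signed function via the one-sided Birkhoff theorem, and then force triviality of $\nu_y^U$ by a support bootstrap. The structure is sound, but two points need attention. First, with the paper's right-action convention ($x\mapsto xa$, and $U\subset G^+$ so that $u\mapsto aua^{-1}$ contracts), the equivariance comes out as $\nu_{xa}^U\propto\theta_*\nu_x^U$ with $\theta(u)=a^{-1}ua$ the \emph{expanding} conjugation; your stated identity is therefore the one for $\nu_{xa^{-1}}^U$. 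This is harmless---replace $a$ by $a^{-1}$ throughout, since the invariant $\sigma$-algebra and the a.e.\ forward/backward Birkhoff limits coincide---but it silently flips which direction ($ya^{n}$ versus $ya^{-n}$) makes the final bootstrap contract, so the convention must be pinned down consistently.

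Second, and more substantively, the step ``$\nu_y^U(B_{\lambda^{-1}}^U)=1$, i.e.\ $\mathrm{supp}(\nu_y^U)\subset B_{\lambda^{-1}}^U$'' is a non sequitur: $\nu_y^U$ is a Radon measure normalized on $B_1^U$, not a probability measure, so it may carry arbitrary (even infinite) mass outside $B_{\lambda^{-1}}^U$. The correct deduction, available inside your own setup, is this: on the $a$-invariant set $\{h_\nu(a,U)=0\}$ one has $f\le 0$ and $E(f\mid\mathcal I)=0$, hence $f=0$ $\nu$-a.e.\ there; by invariance the whole orbit of a.e.\ such $y$ satisfies $f=0$, so $c_n(y)=1$, i.e.\ $\nu_y^U(B_{\lambda^{-n}}^U)=1$ for \emph{every} $n$. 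Letting $n\to\infty$ gives $\nu_y^U(U)=1$, so $\nu_y^U$ is a probability measure with $\mathrm{supp}(\nu_y^U)\subset\overline{B_1^U}$. With that uniform compact support bound holding along the entire orbit, your contraction bootstrap then yields $\mathrm{supp}(\nu_y^U)=\{e\}$ and $\nu_y^U=\delta_e$. With this one repair (and the convention fixed), the argument is correct.
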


Leaf-wise measures are constructed by pasting
various  conditional measures
with respect to  countably generated  sigma-rings whose
atoms are orbits of  open bounded subsets of $U$.
For us   sigma-ring is a collection of Borel subsets
of $X$ which are closed under countable unions
and set differences.
For example, given a finite partition $\mathcal P$ of
a set $D$, then arbitrary unions of elements of
 $\mathcal P$  is a
 sigma-ring with maximal element $D$.
In this paper we will not distinguish between a finite
partition and the  sigma-ring it generates.
Let $\mathcal A$ and $\mathcal B$ be two  sigma-rings, then
$\mathcal A\bigvee \mathcal B$ stands for the smallest  sigma-ring
containing both of them.

Let $D$ be a measurable subset of $X$ and let $\mathcal A$
be a
countably generated  sigma-ring  with maximal
element $D$. For each $x\in D$ the atom of $x$
with respect to $\mathcal A$ is
\begin{equation}\notag
[x]_{\mathcal A}=\bigcap _{x\in C\in \mathcal A}C.
\end{equation}
Let $\mathcal M(X)$ be the set of Radon measures on $X$.
We can assign measurably for each $x\in D$ a probability measure
$\nu_x^{\mathcal A}\in \mathcal M(X)$ supported on $[x]_{\mathcal A}$.
Here $\nu_x^{\mathcal A}$ is called the conditional measure of $\nu|_D$ with respect to $\mathcal A$.
 If $\mathcal A$ is
a finite partition of $D$, conditional measure is
uniquely determined by
$$\nu_x^{\mathcal A}(C)
\nu([x]_{\mathcal A})=\nu([x]_{\mathcal A}\cap C)$$ for any
Borel subset $C$ of $X$.

%In what follows we  make it a convention that $\nu_x^{\mathcal A}=0$
% if $\nu(D)=0$ or just ignore $D$
% since  measurable maps are defined up to null sets anyway.

The  sigma-ring $\mathcal A$ is said to be subordinate to $U$
on $D$ if for $\nu$  almost every $x\in D$, we have
$[x]_{\mathcal A}=U_x.x$ where $U_x$ is an open subset of $ U$
and  there exists $\delta>0$
such that
\[B_\delta^U\subset U_x\subset B_{\delta^{-1}}^U.
\]
If $\mathcal A$ is subordinate to $U$, then leaf-wise measures on $D$ are proportional
to the conditional measures with respect to
$\mathcal A$. More precisely, let $x(\nu_x^U|_{U_x})$ be
 the measure on $X$ supported on $xU_x$ such that
 for any Borel set $V\subset U_x$ the value
 $[x(\nu_x^U|_{U_x})](xV )=
\nu_x^U(V)$, then
$x(\nu_x^U|_{U_x})$
is proportional to
$ \nu_x^{\mathcal A}$
for $\nu$  almost every $x\in D$.

\section{Entropy of the limit measure}\label{sec:keytheorem}

\begin{theorem}\label{thm:main}
Let
$\Gamma$ be a discrete subgroup of $G= SL_n(\mathbb R)\times L $
where $L$ is a product of semisimple algebraic groups.
Let $a\in G$ be an element from the class $\mathscr A$
and let
$a_t=\mathrm{diag}(e^t,\ldots,  e^{t}, e^{-(n-1)t})
\in SL_n(\mathbb R)
$ which
commutes with $a$.
Suppose the
one-dimensional subgroup
$U$ is normalized by $a$, $a_t$ and
contained in the unstable horospherical subgroup
of $a$.
Take
 $\mu$ to be an $a$-invariant and  ergodic probability measure on $\Gamma
\backslash G$
with $h_\mu(a, U)>0$.
If for a sequence of real numbers $(T_n)$ with $T_n\to \infty$ the limit
$$\nu=\lim_{n\to \infty}\frac{1}{T_n}\int_0^{T_n} a_t\mu\, \mathrm dt$$
is none-zero,
 then $h_{\nu}(a, U)(x)>0$ for $\nu$ almost every $x$.
\end{theorem}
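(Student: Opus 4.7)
The plan is to combine the leaf-wise measure formalism of Section~\ref{sec:leafmeasure} with the concavity and upper semicontinuity of conditional Shannon entropy as a functional of the underlying measure. Since $a$ and $a_t$ commute, each $a_t\mu$ is $a$-invariant, so $\nu$ is $a$-invariant; a standard boundary-term argument (the difference $a_s\bar\mu_T-\bar\mu_T$ has mass $O(s/T)$, vanishing as $T=T_n\to\infty$) shows $\nu$ is also $a_t$-invariant. Because $a_t$ normalizes $U$, conjugation by $a_t$ is a Lipschitz automorphism of $U$, and a direct calculation from the definition of leaf-wise measures gives
\[
(a_t\mu)_x^U \;\propto\; (\mathrm{Ad}_{a_t^{-1}})_\ast\,\mu_{xa_t^{-1}}^U,
\qquad
h_{a_t\mu}(a,U)(x) \;=\; h_\mu(a,U)(x a_t^{-1}).
\]
By $a$-ergodicity of $\mu$ and the hypothesis $h_\mu(a,U)>0$, the function $h_\mu(a,U)$ equals a constant $c_0>0$ for $\mu$-a.e.\ point; consequently $h_{a_t\mu}(a,U)=c_0$ holds $(a_t\mu)$-a.e.\ for every $t\ge 0$.

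For the integrated lower bound, fix a countably generated $\sigma$-ring $\xi$ subordinate to $U$ on a precompact set of positive $\mu$- and $\nu$-measure, with $a\xi\subseteq\xi$ (constructed via a Bowen-ball type partition as in \cite{EL081}). For any $a$-invariant finite measure $\eta$ making $\xi$ subordinate,
\[
H_\eta(\xi\mid a\xi)\;=\;\int h_\eta(a,U)\,d\eta.
\]
This functional is concave in $\eta$. Applied to $\bar\mu_T:=\tfrac{1}{T}\int_0^T a_t\mu\,dt$, concavity yields $H_{\bar\mu_T}(\xi\mid a\xi)\ge\tfrac{1}{T}\int_0^T H_{a_t\mu}(\xi\mid a\xi)\,dt$. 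Using that $a$ and $a_t$ commute and that $a_t^{-1}\xi$ is again subordinate to $U=a_t^{-1}Ua_t$, one checks $H_{a_t\mu}(\xi\mid a\xi)=c_0$ independent of $t$, so $H_{\bar\mu_T}(\xi\mid a\xi)\ge c_0$. Upper semicontinuity of $\eta\mapsto H_\eta(\xi\mid a\xi)$ under weak-$\ast$ convergence then passes the bound to the limit, giving $H_\nu(\xi\mid a\xi)\ge c_0>0$; equivalently $\int h_\nu(a,U)\,d\nu>0$.

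To upgrade the integrated bound to almost-sure positivity, set $X_0:=\{x:\nu_x^U\text{ is trivial}\}=\{x:h_\nu(a,U)(x)=0\}$, so by Theorem~\ref{thm:zero entropy} the goal becomes $\nu(X_0)=0$. The set $X_0$ is $a$-invariant (entropy contribution is $a$-invariant) and, using $a_t$-invariance of $\nu$ together with the transformation rule above, also $a_t$-invariant. The $a_t$-invariance of $X_0$ gives the identity of measures
\[
\bar\mu_T\big|_{X_0}\;=\;\tfrac{1}{T}\int_0^T a_t(\mu|_{X_0})\,dt.
\]
By $\mu$-ergodicity $\mu(X_0)\in\{0,1\}$. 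If $\mu(X_0)=0$, the right-hand side vanishes for every $T$, hence $\nu|_{X_0}=0$, which is what we want. If $\mu(X_0)=1$, the same identity applied to $X_0^c$ shows $\bar\mu_T$ is concentrated on $X_0$ for every $T$, and a standard weak-$\ast$ regularity argument then shows $\nu$ is concentrated on $X_0$ as well, forcing $\int h_\nu(a,U)\,d\nu=0$ and contradicting the previous paragraph.

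The main obstacle is the middle step: producing a single subordinate $\sigma$-ring $\xi$ that works simultaneously for $\mu$, every $a_t\mu$, and the limit $\nu$, and verifying upper semicontinuity of $\eta\mapsto H_\eta(\xi\mid a\xi)$ in the weak-$\ast$ topology in the present possibly non-compact, mass-escaping setting. A secondary subtlety is the final regularity step concluding that $\nu$ is concentrated on $X_0$ in the case $\mu(X_0)=1$, since $X_0$ need not be closed; this is handled by approximating compact subsets of $X_0^c$ by continuous test functions together with the averaging identity above.
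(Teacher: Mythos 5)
There are two genuine gaps, and each sits exactly where the real difficulty of the theorem lies. First, the step you yourself flag as ``the main obstacle'' --- upper semicontinuity of $\eta\mapsto H_\eta(\xi\mid a\xi)$ under weak$^*$ convergence --- is not a technicality that can be deferred; it is essentially equivalent to (a strengthening of) the statement being proved. In the present setting $\nu$ is only assumed nonzero, so mass may escape, and entropy is \emph{not} upper semicontinuous under weak$^*$ limits on non-compact quotients: the whole point of results like Theorem~\ref{thm:EK} of Einsiedler--Kadyrov is to quantify exactly how much entropy is lost to the cusp, and the best one could hope for from that machinery is $\int h_\nu(a,U)\,\mathrm d\nu\ge c_0-(\text{loss})$, which need not be positive. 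There is also the prior problem of producing a single $\sigma$-ring $\xi$ that is subordinate to $U$ and satisfies $H_{a_t\mu}(\xi\mid a\xi)=\int h_{a_t\mu}(a,U)\,\mathrm d(a_t\mu)$ simultaneously for every $t\in[0,T]$: subordination and the generating property are per-measure conditions, and controlling how $a_t$ distorts the atoms is precisely why the paper's proof introduces the partitions $\mathcal F_m,\mathcal C_m$ of Lemma~\ref{lem:patition} and the correction by a power $a^p$ before inequality~(\ref{eq:reduce}).

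Second, the upgrade from the integrated bound to almost-everywhere positivity is not well-posed. The set $X_0=\{x:\nu_x^U\text{ trivial}\}$ is defined through the leaf-wise measures of $\nu$ and is therefore only determined modulo $\nu$-null sets; since $\mu$ and $\nu$ are typically mutually singular (e.g.\ $\mu$ carried by a single $u(\mathbb T^{n-1})$-orbit while $\nu$ is Haar), the quantity $\mu(X_0)$ and the $a$- and $a_t$-invariance of $X_0$ modulo $\mu$-null sets --- both of which your identity $\bar\mu_T|_{X_0}=\tfrac1T\int_0^T a_t(\mu|_{X_0})\,\mathrm dt$ and the ergodicity dichotomy require --- are simply not defined. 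Relating the $\nu$-triviality locus to the $\mu$-triviality locus is exactly the content of Lemma~\ref{lem:conclu}, and its proof occupies all of Section~\ref{sec:keytheorem}: one fixes finite partitions $\mathcal A=\mathcal F_N$, $\mathcal C=\mathcal C_m$ with $\nu$-null boundaries, uses the increasing martingale theorem to pass from $\nu$ to some $\mu_n$, and then runs the Johnson--Rudolph double averaging (the sets $W(x)$ and $A$ in (\ref{Wx}) and (\ref{A})) to find a single time $t$ at which $(a_t\mu)_x^{\mathcal F}$ is concentrated on small $U$-balls on a positive-measure set, before pulling back by $a_{-t}a^{-p}$. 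Your proposal has no substitute for this transfer, so even granting the integrated bound of Step~3 the conclusion does not follow.
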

%\begin{remark}
%Since $a_t$ and $a$ are commuting semisimple elements, $a_t$ also normalizes $U$.
%\end{remark}
\begin{remark}\label{rem:general case}
There is no need to assume $U$ is one dimensional if $a_t$ commutes with $U$.
But in the case where $U=U_1\times U_2$, $h_\mu(a,U_1)=h_\mu(a, U_2)=0$ and
$a_t$ centralizes $U_1$ but contracts or attracts $U_2$, the proof
given below doesn't work.
\end{remark}

We first do some preparation for the proof of Theorem \ref{thm:main}
and establish some technical lemmas. To simplify the notation we 
denote $G=G_0\times G_1 \cdots\times G_l$ where $G_0=SL_n(\mathbb R)$
and $G_i$ is an algebraic group over $k_i$ with a fixed norm
$|\cdot|_i$.

We introduce a parameterization of  neighborhoods for
points in $X$ using the Lie algebra of $G$
so that we can construct  sigma-rings
of neighborhoods of $X$
 whose
atoms are sections of $U$ orbits.
Let
$\mathfrak g_i$ be the Lie algebra of $G_i$, then
the Lie algebra of $G$ is
$\mathfrak g= \prod_{i=0}^l
\mathfrak {g}_i$. The exponential map $\exp$
from $\mathfrak g$ to $G$ is defined by
$$(\ldots,  w_i,\ldots )\in \mathfrak g  \to
(\ldots, \exp(w_i),\ldots)\in G.$$
Suppose $U$ is a subgroup of $G_j$ and the Lie algebra of
$U$ is $\mathrm{Lie}(U)$.
 Let $\tilde {\mathfrak u}$ be a $k_j$-linear  subspace
of $\mathfrak g_j$ 
{\color{red} complementary 
to $\mathrm{Lie}(U)$ with respect to the adjoint action 
of $a$ and $a_t$.
}
%such that  $\mathrm{Lie}( U) \oplus \tilde {\mathfrak u}= \mathfrak g_j$. 
Take  the linear space
$\mathfrak v$ of $\mathfrak g$ to be the product of
 $\mathfrak g_i$
except for the place j where it is
the linear space $\tilde {\mathfrak u}$.
For each factor of $\mathfrak v$ we fix a basis and let $\|\cdot \|_i$
be the corresponding sup norm induced by this basis and the norm
on the base field $k_i$.
We define a norm $\|\cdot\| $ on $\mathfrak v$ by
$$\| (w_0, \ldots,w_l)\|=\sup_i (\|w\|_i).$$

Let $V=\{\exp(v): v\in \mathfrak v\}$.
The ball of radius $r$ in $V$ centered at
 $g=\exp(w_0)$ with $w_0\in \mathfrak v$ is denoted by
$$B_r^V(g)=\{\exp (w_0+w):w\in \mathfrak v,
\|w\|_{\mathfrak v}<r\}.$$
If $g$ is the identity we simply denote this ball by
$B_r^V$.
As in Section \ref{sec:leafmeasure}, we take $d^U$ to be
the    translation invariant
metric   on the unipotent subgroup $U$ induced from
the norm of the local field and $B_r^U(g)$
or $B_r^U$ if $g$ is the identity
 is the ball of radius $r$ centered at $g\in U$.
For each  $x\in X$ there exists $r>0$ such that
the map from 
 $B_r^VB_r^U=\{gh: g\in B_r^V, h\in B_r^U\}$ to $X$
 which sends $vu$ to $xvu$ is injective.
We call such a number $r$  an injectivity radius at $x$.
For technical reasons we further require that injectivity
radius $r$ is small so that the product space
$B_r^V\times B_r^U$ can be naturally identified
$B_r^VB_r^U$.

Let $\nu$ be the nonzero limit measure  in Theorem \ref{thm:main}.
Without loss of generality we assume that $T_n=n$ and $\nu=\lim \mu_n$
 where
\[
\mu_n=\frac{1}{n}\int_0^n a_t\mu\,\mathrm d t.
\]
Now we assume that the set of points $x$ with
$h_{\nu}(a, U)(x)=0$  has positive
$\nu$ measure and develop some consequences.

There exist $\sigma>0$, $z\in X$ and $r>0$
 such that $2r$ is an injectivity radius at $z$ and
 the set $D=zB_r^VB_r^U$
 has the following property:
 \begin{equation}\label{eq:corec D}
\nu\big(\{x\in D: \nu _x^U  \mbox{ is trivial} \}\big)>\sigma
\end{equation}
Let $D_2=zB_r^VB_{2r}^U$, we assume that
 the boundary of $D_2$ in $X$ has $\nu$ measure zero.
 We are going to construct a countably generated sigma-ring
 on $D_2$ subordinate to  $U$.

\begin{lemma}\label{lem:patition}
There exist  increasing sequences of finite partitions
$\mathcal F_m $ and $\mathcal C_{m}$ $(m\in \mathbb N)$ of $D_2$
such that the interiors of
 atoms of $\mathcal F_m$ and  $\mathcal C_m$ have the form
${\color{red}x}B_s^{V}(h) B_{{\color{red}2r}}^{U}
$ and  $x B_r^{V} B_{s}^{U}(g)$
 respectively where
$s<\frac{1}{m}$, $g\in B_r ^U$ and $h\in B_r^V$. Furthermore the
 boundaries
of atoms of $\mathcal F_m$ and $\mathcal C_m$ can be taken to have
 $\nu$ measure
 zero.
\end{lemma}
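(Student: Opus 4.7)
The plan is to construct $\mathcal F_m$ by pulling a nested sequence of tilings of the parameter box $B_r^V \times B_{2r}^U$ back to $D_2$ through the injective chart $(v,u) \mapsto zvu$, using tilings of $V$ that refine as $m$ grows while keeping $B_{2r}^U$ as a single slice in the $U$-direction; the construction of $\mathcal C_m$ is symmetric, refining in the $U$-direction while keeping $B_r^V$ as a single slice. The two technical issues are maintaining the refinement property between successive levels and arranging that the boundary of each atom carries no $\nu$-mass.

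Write $\mathfrak v = \mathfrak v_0 \oplus \mathfrak v_1 \oplus \cdots \oplus \mathfrak v_l$ along the factors of $\mathfrak g$, so that $V$ factors accordingly as a direct product $V_0 V_1 \cdots V_l$. On each non-archimedean $V_i$ the clopen balls of small radius (in the chosen norm) form a partition that nests automatically as the radius shrinks. On the archimedean factor, which is a real vector space in the sup-norm, I use a dyadic tiling by cubes of side $2^{-m-M}$ for a constant $M$ chosen so the diameter is below $1/m$, translated by a single common vector $\xi$; the level-$(m+1)$ grid then bisects the level-$m$ grid, and the product tiling of $V$ refines across levels with tiles of diameter less than $1/m$.

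To secure the null-boundary condition, the archimedean cube boundary lies in a finite union of coordinate hyperplanes. Because $\nu(D_2) < \infty$, for each coordinate direction $i$ the set of real numbers $c$ with
\[
\nu\bigl(z\{(v_0)_i = c\}\, B_r^V B_{2r}^U\bigr) > 0
\]
is countable. Taking the union over $i$, over levels $m$, and over lattice positions gives a countable bad set; a generic $\xi$ avoids it. The bounding hyperplanes of $B_r^V$ are of the same form and a further countable avoidance makes their $\nu$-traces null as well. Edge cubes are then simply intersected with $B_r^V$ without introducing new boundary mass. I define $\mathcal F_m$ as the image of this tiling crossed with $B_{2r}^U$ under the chart; each atom has $\nu$-null boundary, and the atoms whose cube lies entirely inside $B_r^V$ have interior exactly $xB_s^V(h) B_{2r}^U$ with $s<1/m$ and $h\in B_r^V$. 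The symmetric construction applied to $U$ yields $\mathcal C_m$.

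The main obstacle is coupling nestedness with the null-boundary condition: independent per-level offsets easily give boundary-null partitions without nesting, while a fixed shift nests but risks hitting bad hyperplanes. The resolution is that a single common $\xi$ introduces only countably many hyperplanes across all scales, and the set of $\xi$ avoiding all of them is the complement of a countable set, hence dense. Once $\xi$ is chosen, the remaining verifications -- injectivity of the chart, the prescribed form of the atom interiors, and the refinement $\mathcal F_m \subset \mathcal F_{m+1}$ -- follow by construction.
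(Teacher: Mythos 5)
Your construction is exactly the ``standard argument'' via the linearized parameterization that the paper explicitly omits: dyadic product tilings of the chart box, nested across levels by a single common translation, with the shift chosen outside the countable set of hyperplane positions carrying positive $\nu$-mass (legitimate since $\nu(D_2)<\infty$), and clopen balls handling the non-archimedean factors. This is correct and complete, modulo the same looseness already present in the lemma's statement about edge atoms near $\partial B_r^V$ not being literally of the form $xB_s^V(h)B_{2r}^U$.
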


The proof of Lemma \ref{lem:patition}
uses the linearized parameterization of $D_2$ introduced above.
 We omit the standard argument here.
We fix increasing partitions $\mathcal F_m$ 
and  $\mathcal C_m$ $(m\in \mathbb N)$
for $D_2$ as in Lemma \ref{lem:patition}.
 Let
$\mathcal F=\bigvee_{m\in \mathbb N}\mathcal F_m$,
 then it
is a countably generated  sigma-ring whose atoms {\color{red}contain} 
$yB_r^{U}$ {\color{red}for} $y\in D$.

According to the  property of the set $D$ in (\ref{eq:corec D})
 and the relationship
between leaf-wise measures and conditional measures
 discussed in Section \ref{sec:leafmeasure}, for any $m>0$,
\begin{equation}\label{limit}
\nu\big(\{x\in D:\nu_x^{\mathcal F}([x]_{\mathcal C_m})=1\}\big)>\sigma.
\end{equation}

Now we want to  reduce (\ref{limit}) to a similar equation
about $a_t\mu$ for some real number $t$
using Johnson and Rudolph's argument in Section 5 of
  \cite{JR95}.
Fix some $0<\epsilon <\min(\sigma,\frac{1}{16})$ and $m>0$, and set
$\mathcal C=\mathcal C_{m}$.
By (\ref{limit}) and the increasing martingale theorem,
for $N$ large enough and  $\mathcal A=\mathcal F_{N}$, we have
\begin{equation}
\nu\big(\{x\in D:\nu_x^{\mathcal A}([x]_{\mathcal C})>1-\epsilon/2\}\big)>\sigma-\epsilon/2.               \notag
\end{equation}
%Figure \ref{figure} illustrates partitions $\mathcal A$ and
%$\mathcal C$ in $U\times V$ chart.

Since atoms of $\mathcal A$ and $\mathcal C$ have $\nu$ measure zero
 boundaries, there exists a positive integer $n$ such that for
\begin{equation}\label{defN}
\mathcal N=\{x\in D: \mu_{n,x}^{\mathcal A}([x]_{\mathcal C})>1-\epsilon\}
\end{equation}
 we have
$
\mu_n(\mathcal N)>\sigma-\epsilon.
$
Note that  $\mathcal A$ and $\mathcal C$ are finite partitions
and all the $x$ in some atom $ [y]_{\mathcal A\bigvee \mathcal C}$ give the same value
 for $\mu_{n,x}^{\mathcal A}([x]_{\mathcal C})$ which satisfies
 \[
 \mu_n([x]_{\mathcal A})\mu_{n,x}^{\mathcal A}([x]_{\mathcal C})=
 \mu_n([x]_{\mathcal A\bigvee \mathcal C}).
 \]
 We remark here that  $\mathcal N\cap [x]_{\mathcal A}$ is
either empty or an atom of $\mathcal A\bigvee \mathcal C$
 for any $x\in D$ since $\epsilon <\frac{1}{16} $.
% The shape of $\mathcal N$ is given in the shaded areas in Figure~\ref{figure}.

\begin{comment}

\begin{figure}
\center{
 \includegraphics{exercise3.5}
}
\caption{$ \mathcal A$ and  $\mathcal C $ in $U\times V$ chart;
$[x]_{\mathcal A}$ is a horizontal rectangle;
$[x]_{\mathcal A\vee \mathcal C}$ is a small rectangle;  $\mathcal N$
is the shaded area.}
\label{figure}
\end{figure}

\end{comment}

For each  $x\in \mathcal N$, let
 \begin{equation}\label{Wx}
W(x)=
\{0\le t\le n : (a_t\mu)_x^{\mathcal A}([x]_{\mathcal C})>1-\epsilon^{1/2}\}.
\end{equation}
Let $W(x)^c=[0,n]\backslash W(x)$, 
then for any $x\in \mathcal N$ and
$t\in W(x)^c$,
\begin{equation}\label{eq:afterwx}
(a_t\mu)([x]_{\mathcal A}\backslash [x]_{\mathcal A\bigvee \mathcal C})
\ge \epsilon^{1/2}(a_t\mu) ([x]_{\mathcal A}).
\end{equation}
\begin{lemma}\label{lem:forany}
For any $x\in \mathcal N$, we have
\begin{equation}\label{claim1}
\frac{1}{n}\int_{ W(x)}(a_t\mu)([x]_{\mathcal A})
\,\mathrm d t
>(1-\epsilon^{1/2})\mu_n([x]_{\mathcal A}).
\end{equation}
\end{lemma}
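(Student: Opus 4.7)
The plan is to translate the hypothesis $x \in \mathcal N$ from a statement about conditional measures into an integral bound on $(a_t\mu)([x]_{\mathcal A}\setminus [x]_{\mathcal A \vee \mathcal C})$, and then apply a Chebyshev-type argument using the defining inequality of $W(x)$.

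First I would rewrite $\mu_{n,x}^{\mathcal A}([x]_{\mathcal C}) > 1-\epsilon$ in the unconditional form
\[
\mu_n\bigl([x]_{\mathcal A}\setminus [x]_{\mathcal A \vee \mathcal C}\bigr) < \epsilon\,\mu_n([x]_{\mathcal A}),
\]
and, because $\mu_n = \frac{1}{n}\int_0^n a_t\mu\,dt$, reinterpret the left side as $\frac{1}{n}\int_0^n (a_t\mu)([x]_{\mathcal A}\setminus [x]_{\mathcal A \vee \mathcal C})\,dt$. Next I would observe that the definition (\ref{Wx}) of $W(x)$ says exactly that for $t \in W(x)^c$ the conditional mass $(a_t\mu)_x^{\mathcal A}([x]_{\mathcal C})$ is at most $1-\epsilon^{1/2}$, which (as already recorded in (\ref{eq:afterwx})) means
\[
(a_t\mu)\bigl([x]_{\mathcal A}\setminus [x]_{\mathcal A \vee \mathcal C}\bigr) \ge \epsilon^{1/2}\,(a_t\mu)([x]_{\mathcal A}) \quad \text{for all } t \in W(x)^c.
\]

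Integrating this pointwise inequality over $W(x)^c$ and combining with the bound from the first step gives
\[
\frac{\epsilon^{1/2}}{n}\int_{W(x)^c}(a_t\mu)([x]_{\mathcal A})\,dt \le \frac{1}{n}\int_0^n (a_t\mu)\bigl([x]_{\mathcal A}\setminus [x]_{\mathcal A \vee \mathcal C}\bigr)\,dt < \epsilon\,\mu_n([x]_{\mathcal A}),
\]
so that $\frac{1}{n}\int_{W(x)^c}(a_t\mu)([x]_{\mathcal A})\,dt < \epsilon^{1/2}\mu_n([x]_{\mathcal A})$. Subtracting this from the identity $\mu_n([x]_{\mathcal A}) = \frac{1}{n}\int_0^n (a_t\mu)([x]_{\mathcal A})\,dt$ yields the claim (\ref{claim1}).

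The argument is essentially a one-line averaging computation once the ingredients are lined up, so there is no serious obstacle. The only thing to be careful about is that $\mathcal A$ and $\mathcal C$ are \emph{finite} partitions, so the function $t \mapsto (a_t\mu)([x]_{\mathcal A})$ (and likewise for $[x]_{\mathcal A \vee \mathcal C}$) is well-defined and measurable in $t$, and the value of $[x]_{\mathcal A}$ as a set does not depend on which representative $x$ of the atom we take — this is what allows the pointwise (in $t$) inequality on $W(x)^c$ to be integrated against $dt$ without any measurability issue.
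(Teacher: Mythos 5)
Your proof is correct and uses exactly the same ingredients as the paper's — the unconditional reformulation of the definition of $\mathcal N$, the inequality (\ref{eq:afterwx}) on $W(x)^c$, and averaging over $[0,n]$; the paper merely packages the identical chain of inequalities as a proof by contradiction, whereas you run it directly. No gap.
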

\begin{proof}
Assume the contrary.
Then
\begin{eqnarray}
\mu_n([x]_{\mathcal A}\backslash[x]_{\mathcal A\bigvee \mathcal C})
&\ge&
\frac{1}{n}\int_{W(x)^c}
a_t\mu([x]_{\mathcal A}\backslash[x]_{\mathcal A\bigvee \mathcal C}) \,\mathrm d t
\notag\\
\mbox{(by (\ref{eq:afterwx}) )}\qquad&\ge &
  \frac{1}{n}\int_{W(x)^c}
\epsilon ^{1/2}(a_t\mu)([x]_{\mathcal A})\,\mathrm d t \notag \\
\mbox{(from the negation of (\ref{claim1}) )} \qquad&\ge & \epsilon\, \mu_n([x]_{\mathcal A}),\notag
\end{eqnarray}
which contradicts the definition of $\mathcal N$ in (\ref{defN}).
\end{proof}

Next we set
\begin{equation}\label{A}
A=\left\{0\le t \le n: a_t\mu{\big(}\{x\in \mathcal N:t\in W(x)\}{\big)}>(1-
\epsilon^{1/4})a_t\mu(\mathcal N)\right\}.
\end{equation}
Then for any $t\in A^c=[0, n]\backslash A$, we have
\begin{equation}\label{eq:at mu}
a_t\mu{\big(}\{x\in \mathcal N:t\not\in W(x)\}{\big)}\ge
\epsilon^{1/4}a_t\mu(\mathcal N).
\end{equation}
We remark here that for  fixed $t$, the set
 $\{x\in \mathcal N:t\not\in W(x)\}$
 is a union of atoms of $\mathcal A\bigvee \mathcal C$
and there is at most one of them in each atom of $\mathcal A$
since $\mathcal N\cap [x]_{\mathcal A}$ contains at most 
one atom of $\mathcal A\bigvee \mathcal C$.

\begin{lemma}\label{lem:A}
\begin{equation}\label{claim2}
\frac{1}{n}\int_{ A}a_t\mu(\mathcal N)
\,\mathrm d t > (1- {\color{red}2}\epsilon^{1/4})\mu_n(\mathcal N).               \notag
\end{equation}
\end{lemma}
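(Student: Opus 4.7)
The plan is to reuse the averaging dichotomy of Lemma \ref{lem:forany} one level up: bound the double integral $\frac{1}{n}\int_0^n a_t\mu(\{x\in\mathcal N : t\in W(x)\})\,\mathrm dt$ from below using Lemma \ref{lem:forany}, and then split along $A$ versus $A^c$ and invoke the defect estimate (\ref{eq:at mu}).

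As noted after (\ref{defN}), for each $\mathcal A$-atom meeting $\mathcal N$ the intersection is a single atom of $\mathcal A\bigvee\mathcal C$; write $\mathcal N = \bigsqcup_i \Omega_i$ with $\Omega_i \subset [x_i]_{\mathcal A}$. Because the conditional mass $(a_t\mu)_x^{\mathcal A}([x]_{\mathcal C})$ depends only on the $\mathcal A\bigvee\mathcal C$-atom of $x$, the set $W(x)$ from (\ref{Wx}) is constant on each $\Omega_i$; call it $W_i$. Unfolding the conditional measure, the defining inequality of $W_i$ reads
$$a_t\mu(\Omega_i) \ge (1-\epsilon^{1/2})\,a_t\mu([x_i]_{\mathcal A}) \qquad (t\in W_i).$$
Integrating over $W_i$ and combining with Lemma \ref{lem:forany} applied to $[x_i]_{\mathcal A}$, together with the crude bound $\mu_n([x_i]_{\mathcal A}) \ge \mu_n(\Omega_i)$, gives
$$\frac{1}{n}\int_{W_i} a_t\mu(\Omega_i)\,\mathrm dt > (1-\epsilon^{1/2})^2\,\mu_n(\Omega_i).$$
Summing over $i$ and using $\{x\in\mathcal N : t\in W(x)\} = \bigsqcup_{i:\,t\in W_i}\Omega_i$ then yields
$$\frac{1}{n}\int_0^n a_t\mu\bigl(\{x\in\mathcal N : t\in W(x)\}\bigr)\,\mathrm dt > (1-\epsilon^{1/2})^2\,\mu_n(\mathcal N) \ge (1-2\epsilon^{1/2})\,\mu_n(\mathcal N).$$

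To finish, split this double integral as $\int_A + \int_{A^c}$. On $A^c$ the integrand is at most $(1-\epsilon^{1/4})\,a_t\mu(\mathcal N)$ by the definition (\ref{A}) of $A$, and on $A$ it is trivially at most $a_t\mu(\mathcal N)$. Setting $I = \frac{1}{n}\int_A a_t\mu(\mathcal N)\,\mathrm dt$, the estimate above becomes
$$(1-2\epsilon^{1/2})\,\mu_n(\mathcal N) < I + (1-\epsilon^{1/4})\bigl(\mu_n(\mathcal N)-I\bigr),$$
i.e., $\epsilon^{1/4}\,I > (\epsilon^{1/4}-2\epsilon^{1/2})\,\mu_n(\mathcal N)$, which after dividing by $\epsilon^{1/4}$ is exactly $I > (1-2\epsilon^{1/4})\,\mu_n(\mathcal N)$.

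The only real obstacle is bookkeeping: one must carefully track which quantities depend only on the $\mathcal A$-atom $[x_i]_{\mathcal A}$ and which depend on the finer $\mathcal A\bigvee\mathcal C$-atom $\Omega_i$, and in particular use $\epsilon < \frac{1}{16}$ to ensure $\mathcal N\cap[x]_{\mathcal A}$ contains at most one atom of $\mathcal A\bigvee\mathcal C$ (so that the decomposition $\mathcal N = \bigsqcup_i \Omega_i$ is well-defined and each $W(x)$ factors through the index $i$).
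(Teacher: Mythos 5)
Your proof is correct and uses the same ingredients as the paper's own argument --- the decomposition of $\mathcal N$ into atoms of $\mathcal A\bigvee\mathcal C$, Lemma \ref{lem:forany}, and the Markov-type inequality (\ref{eq:at mu}) --- merely run as a direct lower bound on the ``good'' sets $\{t\in W(x)\}$ rather than as a contradiction argument on their complements $W(x)^c$. The bookkeeping and the final algebra (dividing by $\epsilon^{1/4}$ to obtain the constant $2\epsilon^{1/4}$) check out.
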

\begin{proof}
Assume the contrary. Let
$\mathcal G=\{(x, t): x\in \mathcal N, t\not \in W(x)\}$.
Since  all the $x\in [y]_{\mathcal A\bigvee \mathcal C}$
have the same set $W(x)$,  we have that $\mathcal G$ is a union of
sets of the form $[x]_{\mathcal A\bigvee \mathcal C}\times
W(x)^c$. Therefore by (\ref{eq:at mu}) we have
\begin{eqnarray}
\frac{1}{n}\int_{ A^c}a_t\mu(\mathcal N) \,\mathrm d t
&\le&
\epsilon ^{-1/4}\frac{1}{n}\int_{ A^c} a_t\mu\big(\{x\in \mathcal N:t\not\in W(x)\}\big) \,\mathrm d t\label{pfclaim1} \notag\\
&\le& \epsilon ^{-1/4}\frac{1}{n}\sum_{[x]_{\mathcal A\bigvee \mathcal C}\subset \mathcal N}\int_{t\in W(x)^c}
a_t\mu([x]_{\mathcal A\bigvee \mathcal C})\,\mathrm d t. \notag
\end{eqnarray}
Since $\mathcal N\cap [x]_{\mathcal A}$ is either empty or a
single atom
of $\mathcal A\bigvee \mathcal C$, the right hand side of the last
inequality is
\begin{eqnarray*}
&\le& \epsilon ^{-1/4}\frac{1}{n}
\sum_{[x]_{\mathcal A\bigvee \mathcal C}\subset \mathcal N}\int_{t\in W(x)^c}
a_t\mu([x]_{\mathcal A})\,\mathrm d t             \\
(\mbox{by Lemma \ref{lem:forany}})\qquad &< & {\color{red}2}\epsilon ^{1/4}\mu_n(\mathcal N). \notag
\end{eqnarray*}
Now
a simple calculation gives us the inequality of the lemma.
\end{proof}

In view of Lemma \ref{lem:A}, there is some $t\in A$ such that
\[ a_t\mu(\mathcal N)\ge (1- {\color{red}2}\epsilon ^{1/4})\mu_n(\mathcal N).\]
Now we fix such a number $t$.
According to the definition of $\mathcal N$, $W(x)$  and $A$  in
(\ref{defN}), (\ref{Wx})
and (\ref{A}) respectively, we have
\begin{equation}
a_t\mu \big (\{x\in D:(a_t\mu)_x^{\mathcal A}([x]_{\mathcal C})>1-\epsilon^{1/2}\}\big)
\ge(1- {\color{red}2}\epsilon^{1/4})^2(\sigma-\epsilon).                       \notag
\end{equation}
Since $\big((a_t\mu)_x^{\mathcal A}\big)_y^{\mathcal F}=(a_t\mu)_y^{\mathcal F}$
for $ a_t\mu$  almost every $x\in D_2$ and $ (a_t\mu)_x^{\mathcal A}$  almost every $y\in D_2$,
we have
\begin{equation}\label{semifinal}
a_t\mu\big(\{x\in D:(a_t\mu)_x^{\mathcal F}([x]_{\mathcal C})>1-\epsilon^{1/4}\}\big)
\ge(1- {\color{red}2}\epsilon^{1/4})^3(\sigma-\epsilon).
\end{equation}

It is not hard to see that $(a_t\mu)_x^{\mathcal F}=
{\color{red}a_t}
\mu_{xa_{-t}}^{\mathcal Fa_{-t}}$.
Recall that $a_t$ normalizes the group $U$ according
to the assumption
 of Theorem \ref{thm:main}, so
 the atoms of $\mathcal F a_{-t}$ is still a section
 of $U$ orbit.
To get anything useful  we need that $a_{-t}$ does not change the shape
of atoms of $\mathcal F$ much
which is the only obstruction for the more general case discussed in
Remark \ref{rem:general case}.
This happens if $a_t$ commutes with $U$.
If not we will use the property that $\mu$ is invariant under the the diagonal
element  $a$  to choose some integer
$p$ so that the right translation by the inverse of $a_t a^p$ does not change the shape of atoms of $\mathcal F$
much.

Since the right translation of $a$ stretches foliations of $U$,
there exists an integer $p$ such that $a_t a^p$
does not stretch $U$ orbits, but $a_t a^{p+1}$ does.
Replace $\mu$ in (\ref{semifinal}) by $a^p\mu$ and    rewrite it we get
 \begin{equation}\label{eq:reduce}
\mu \big(\{x\in Da^{-p}a_{-t}:\mu_x^{\mathcal Fa^{-p}a_{-t}}([x]_{\mathcal Ca^{-p}a_{-t}}\big)>1-\epsilon^{1/4}\})
\ge(1- {\color{red}2}\epsilon^{1/4})^3(\sigma-\epsilon).
\end{equation}

According to the construction of $\mathcal F_m$ and 
$\mathcal C_m$ in Lemma \ref{lem:patition}
there is a constant $c\ge 2$ not depending on $t$ such that
\begin{equation}\label{eq:cor contained}
[x]_{\mathcal F a^{-p}a_{-t}\bigvee \mathcal Ca^{-p}a_{-t}}
\subset x B_{c/m}^U
\quad \mbox{and} \quad  xB_{r}^U\subset [x]_{\mathcal F a^{-p}a_{-t}}
\end{equation}
for any  $x\in Da^{-p}a_{-t}$.
In view of (\ref{eq:cor contained}), (\ref{eq:reduce})
and the relationship between conditional measures and
leaf-wise measures in Section \ref{sec:leafmeasure}, if we set
$$D_m=\left\{x\in X:\mu_x^U(B_{  c/m}^U)>
(1-\epsilon^{1/4})\mu_x^{U}(B_{  r}^U)\right\},$$
then
\begin{equation}
\mu(D_m)
\ge(1- {\color{red}2}\epsilon^{1/4})^3(\sigma-\epsilon).               \notag
\end{equation}

It is easy to see that $D_m\supset D_{m+1}$.
Since $\epsilon$ is independent of $m$, we may
let $ m$ go to infinity and conclude that
$\mu_x^U$ is trivial on a set whose $\mu$ measure is strictly bigger
than zero. This completes the proof of the following
\begin{lemma}\label{lem:conclu}
Under the notation of Theorem \ref{thm:main}, if the set
of  $x$ with $h_\nu(a, U)(x)=0$ has positive $\nu$ measure, then
$h_\mu(a, U)(x)=0$ on a positive $\mu$ measure set.
\end{lemma}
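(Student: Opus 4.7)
The plan is to argue by contradiction along the route implicitly laid out: assume $\{x : h_\nu(a,U)(x) = 0\}$ has positive $\nu$-measure, and deduce that $\mu_x^U$ is trivial on a positive-$\mu$-measure set, which via Theorem \ref{thm:zero entropy} yields the desired conclusion. The strategy has three logically distinct stages, and the whole point of working by contradiction is that "being trivial" propagates naturally backward through the time-average limit defining $\nu$, while "having positive entropy contribution" does not.

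First, I would localize. By Theorem \ref{thm:zero entropy}, triviality of $\nu_x^U$ replaces the entropy-zero hypothesis. Using the Lie-algebra $\exp$-chart split as $B_r^V \cdot B_r^U$, one picks a point $z$ and a radius $r$ (less than half an injectivity radius) so that the box $D = zB_r^V B_r^U$ carries a positive fraction $\sigma$ of such $x$ and has $\nu$-null boundary. On the slightly enlarged box $D_2$ one constructs increasing finite partitions $\mathcal F_m$ and $\mathcal C_m$ as in Lemma \ref{lem:patition}, with $\mathcal F_m$-atoms eventually thinning in the $V$-direction only (hence $\mathcal F = \bigvee \mathcal F_m$ is subordinate to $U$) and $\mathcal C_m$-atoms thinning in the $U$-direction. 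Triviality of $\nu_x^U$ then reads, via the standard identification of leaf-wise measures with $\mathcal F$-conditionals, as $\nu_x^{\mathcal F}([x]_{\mathcal C_m}) = 1$. Replace $\mathcal F$ by a finite cutoff $\mathcal A = \mathcal F_N$ using the increasing martingale theorem, absorbing a small loss $\varepsilon$.

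Second, I would transfer this concentration from $\nu$ to some single time-$t$ measure $a_t\mu$ by a Johnson--Rudolph style double averaging. Since $\nu = \lim \mu_n$ with $\mu_n = \frac{1}{n}\int_0^n a_t\mu\,dt$ and the partition atoms have $\nu$-null boundaries, the concentration statement passes to $\mu_n$ for large $n$, giving a set $\mathcal N$ of $\mu_n$-mass at least $\sigma - \varepsilon$ on which $(\mu_n)_x^{\mathcal A}([x]_{\mathcal C})$ is close to $1$. Two applications of Markov's inequality (one inside each atom of $\mathcal A$, one across atoms) then select an honest time $t \in [0,n]$ such that $a_t\mu$ inherits the concentration property with only a $\varepsilon^{1/4}$-sized degradation. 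Self-consistency of $\mathcal F$-conditionals upgrades the $\mathcal A$-statement to an $\mathcal F$-statement.

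Third, and this is where the real obstacle lies, I need to pull the statement back to $\mu$ itself. The identity $(a_t\mu)_x^{\mathcal F} = a_t(\mu_{xa_{-t}}^{\mathcal F a_{-t}})$ rewrites the concentration as concentration of $\mu$ with respect to $\mathcal F a_{-t}$, whose atoms are $a_{-t}$-distorted $U$-plaques. If $a_t$ commutes with $U$, nothing needs to be done; in general one cancels the distortion using the $a$-invariance of $\mu$. Choose the integer $p$ so that $a_t a^p$ contracts $U$-orbits but $a_t a^{p+1}$ expands them; then $\mathcal F a^{-p}a_{-t}$-atoms are sandwiched between fixed $U$-balls of radii $r$ and $c/m$ uniformly in $t$, so the concentration reads as $\mu_x^U(B_{c/m}^U) \ge (1 - \varepsilon^{1/4})\mu_x^U(B_r^U)$ on a set of positive $\mu$-measure independent of $m$. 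Letting $m \to \infty$ collapses this to triviality of $\mu_x^U$ on a positive-measure set, and Theorem \ref{thm:zero entropy} closes the loop. The hard part is precisely this distortion control; it is also the source of the one-dimensionality restriction noted in Remark \ref{rem:general case}, since a single integer power of $a$ cannot simultaneously normalize two $U$-factors on which $a_t$ acts differently.
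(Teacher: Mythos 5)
Your proposal is correct and follows essentially the same route as the paper: localization via Theorem \ref{thm:zero entropy} and the partitions of Lemma \ref{lem:patition}, the Johnson--Rudolph double Markov argument (the paper's Lemmas \ref{lem:forany} and \ref{lem:A}) to extract a single good time $t$, and the cancellation of the $a_{-t}$-distortion by an integer power $a^p$ using the $a$-invariance of $\mu$, ending with the nested sets $D_m$ and $m\to\infty$. You also correctly identify the distortion-control step as the source of the one-dimensionality restriction in Remark \ref{rem:general case}.
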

\begin{proof}[Proof of Theorem \ref{thm:main}]
Suppose the contrary, then Lemma \ref{lem:conclu} implies that
$h_\mu(a, U)(x)$ is  zero  on a positive   $\mu$ measure set.
Since $\mu$ is ergodic under the map $a$ and
the function $h_\mu(a, U)$ is
$a$-invariant, we have
 $h_\mu(a, U)(x)=0$ for $\mu$ almost every $x$. This
contradicts the assumption for $h_\mu(a, U)$.
\end{proof}

\section{Convergence of measures on quotients of
$SL_2(\mathbb R)\times L$}\label{sec:convergence}
In this section $G=SL_2(\mathbb R)\times L$ where $L$
is a product of semisimple algebraic groups over local fields of
characteristic zero,  $H<G$ is the $SL_2(\mathbb R)$ factor of $G$
and $a_t=\mathrm {diag}(e^t, e^{-t})\in H$.
We will use
 Theorem \ref{thm:main} to prove some
equidistribution results on homogeneous spaces using
measure rigidity in Lindenstruass \cite{L06}.
 We state a slightly variant version of
 Theorem 1.1 of \cite{L06}
 for the convenience of the reader.
\begin{theorem}[\cite{L06}, Theorem 1.1]
Let
$\Gamma$  be a discrete subgroup of $G$ such that
$\Gamma\cap L$ is finite. Suppose $\nu$ is a
probability measure on $X=\Gamma\backslash G$, invariant
under the multiplication from the right by elements of
the diagonal group flow $a_t$,
assume that
\begin{enumerate}
\item All ergodic components of $\mu$ with respect to the
flow $a_t$ have positive entropy.
\item $\mu$ is $L$-recurrent.
\end{enumerate}
Then $\mu$ is a linear combination of algebraic measures invariant
under $H$.
\end{theorem}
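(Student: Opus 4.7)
The plan is to reduce the classification to the assertion that $\mu$ is invariant under the expanding horospherical $U^+$ of $a_t$ inside $H=SL_2(\mathbb R)$, and then to invoke the ergodic theory of unipotent flows on $SL_2$-homogeneous spaces. Indeed, once $U^+$-invariance is established, the group generated by $U^+$ and $\{a_t\}$ is a Borel subgroup of $H$; decomposing $\mu$ into $U^+$-ergodic components and applying Dani's classification of $U^+$-invariant probability measures on $\Gamma\backslash G$ (together with the hypothesis $\Gamma\cap L$ finite, which prevents closed orbits from degenerating into the $L$-factor) forces each component to be either supported on a periodic $U^+$-orbit or to be algebraic and $H$-invariant. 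The first possibility is ruled out by the positive-entropy hypothesis on $a_t$-ergodic components. Integrating over the ergodic decomposition then yields the asserted linear combination of algebraic $H$-invariant measures.

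The real work is therefore to show that for $\mu$-a.e.\ $x$ the leaf-wise measure $\mu_x^{U^+}$ is translation-invariant on $U^+\cong\mathbb R$. Non-triviality of $\mu_x^{U^+}$ is immediate from the positive entropy hypothesis on $a_t$-ergodic components together with Theorem \ref{thm:zero entropy}. Upgrading non-triviality to Haar is exactly the place where the $L$-recurrence hypothesis must be exploited, and is where I expect the main obstacle to lie.

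The approach I would pursue is a drift argument in the spirit of the high-entropy method developed in \cite{EKL}. By $L$-recurrence, for $\mu$-a.e.\ $x$ there are non-trivial elements $\ell_n\in L$ tending to the identity such that $x\ell_n$ returns to a set $K$ on which the leaf-wise measures vary continuously in a quantitative Lusin sense and on which the injectivity radius is bounded below. Because $a_t$ commutes with $L$, the displacement $\ell_n$ persists along the trajectory $xa_t$; by picking $t=t_n$ so that a reference scale in $U^+$ is stretched to macroscopic size while $xa_{t_n}$ and $xa_{t_n}\ell_n$ both lie in $K$, one can compare the normalized restrictions of $\mu_{xa_{t_n}}^{U^+}$ and $\mu_{xa_{t_n}\ell_n}^{U^+}$. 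Using that leaf-wise measures along $U^+$ are determined up to a multiplicative constant by a single representative in the fibre, one trades the small $L$-displacement for a genuine non-trivial $U^+$-translation that preserves $\mu_x^{U^+}$. Passing to a limit and iterating, together with the dichotomy that a closed subgroup of $\mathbb R$ is either discrete or all of $\mathbb R$, forces $\mu_x^{U^+}$ to be Haar.

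The technical heart of the argument, and the main obstacle, is the construction of a single good set on which all the Lusin approximations, the bounded injectivity radius, and the quantitative $L$-recurrence hold simultaneously, together with the verification that the resulting limit $U^+$-translations are actually non-trivial and do not collapse to the identity. This requires a careful balance between the rate at which $\ell_n\to e$ and the exponential expansion rate of $a_t$ on $U^+$; the entropy hypothesis enters here through the fact that $\mu_x^{U^+}$ has enough mass on macroscopic scales (via Theorem \ref{thm:zero entropy}) to ensure that the translations extracted in the limit are genuinely visible.
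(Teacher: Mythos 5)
This statement is not proved in the paper at all: it is quoted verbatim (in slightly modified form) from Lindenstrauss \cite{L06}, Theorem~1.1, and the paper simply cites it. So your proposal has to be judged on its own merits as an attempted proof of Lindenstrauss's theorem, and there it has a genuine gap at exactly the point you flag as the ``technical heart.''

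Your outline --- show the leaf-wise measures $\mu_x^{U^+}$ are Haar, conclude $U^+$-invariance, then combine with $a_t$-invariance, an entropy argument for $U^-$, and unipotent rigidity to get $H$-invariant algebraic pieces --- is indeed the correct architecture. The problem is the mechanism you propose for the central step. A ``drift argument in the spirit of the high-entropy method of \cite{EKL}'' cannot convert a small displacement $\ell_n\in L$ into a nontrivial $U^+$-translation preserving $\mu_x^{U^+}$, for a structural reason: $L$ centralizes both $U^+$ and $a_t$. The two trajectories $xua_t$ and $x\ell_n ua_t=xua_t\ell_n$ stay at the constant ``distance'' $\ell_n$ for all $t$ and all $u$; there is no shearing between them, so flowing by $a_{t_n}$ produces no relative displacement in the $U^+$ direction whatsoever. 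The high-entropy method needs nontrivial leaf-wise measures along at least two non-commuting unipotent directions so that commutators generate new invariance; here the hypotheses give positive entropy only along the single direction $U^+$, and $L$-recurrence is far weaker than nontriviality of conditionals along $L$ (and even nontriviality would not help, since $[L,U^+]=e$). This is precisely why Lindenstrauss had to develop the ``low-entropy method'': a much more delicate argument that directly compares the a priori unrelated measures $\mu_x^{U^+}$ and $\mu_{x\ell}^{U^+}$ for recurrent displacements $\ell\in L$ and shows that generic compatibility of these comparisons along the $a_t$-flow forces translation invariance. Your sketch does not contain this idea, and the step ``one trades the small $L$-displacement for a genuine non-trivial $U^+$-translation'' is, as written, false. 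A secondary (repairable) issue is the appeal to ``Dani's classification'': $\Gamma$ is only assumed discrete, not a lattice, so one should invoke Ratner's measure classification for the one-parameter unipotent $U^+$ and then rule out intermediate invariance groups using the maximal-entropy characterization of $U^\pm$-invariance, rather than a horospherical classification theorem.
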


\begin{corollary}\label{cor:rigidity}
Let  $U<H$ be a subgroup of the unstable horospherical subgroup
of $a=a_{t_0}\times c\in G$ where $t_0\neq 0 $ and $c\in L$ is a
non-neutral class $\mathscr A$ element. Let $\Gamma $ be a discrete subgroup
of $G$ such that $\Gamma\cap L$ is finite.
Suppose
 $\mu$ is an $a$-invariant and ergodic probability measure on $\Gamma
\backslash G$
such that $h_\mu(a, U)>0$.
If for a sequence of real numbers $(T_n)$ with $T_n\to \infty$
$$\nu=\lim_{n\to \infty}\frac{1}{T_n}\int_0^{T_n} a_t\mu\, \mathrm dt,$$
  then
$\nu$ is a linear combination of algebraic measures invariant under
$H$.
\end{corollary}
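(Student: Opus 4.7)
The strategy is to verify the hypotheses of the Lindenstrauss rigidity theorem above for the limit measure $\nu$ and then simply invoke it. If $\nu=0$ the conclusion is trivial, so assume $\nu\neq 0$. As a limit of Birkhoff-type time averages $\nu$ is automatically $a_t$-invariant for every $t$, and it is also $a$-invariant, because $\mu$ is $a$-invariant and $a=a_{t_0}\times c$ commutes with the whole flow $\{a_t\}$ (the $H$-component $a_{t_0}$ lies in the same diagonal torus as $a_t$, and $c\in L$ centralizes $H$). In particular it makes sense to speak of $h_\nu(a,U)$ and of the $a_t$-ergodic decomposition of $\nu$.

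I would then apply Theorem \ref{thm:main} to $\nu$. Its hypotheses are satisfied: $a$ is from class $\mathscr A$ (its $H$-factor $a_{t_0}$ is $\mathbb R$-split with distinct eigenvalue norms since $t_0\neq 0$, and $c$ is class $\mathscr A$ by assumption); the one-dimensional subgroup $U$ lies in the unstable horospherical of $a$ (equivalently of $a_{t_0}$, since $c$ centralizes $H$) and is normalized by both $a$ and the flow $a_t$; and $h_\mu(a,U)>0$ by assumption. The theorem then yields $h_\nu(a,U)(x)>0$ for $\nu$-a.e.\ $x$.

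The two hypotheses of Lindenstrauss's theorem now follow. For the entropy condition, $a$ and $a_{t_0}$ induce the same conjugation on $U$, so $h_\nu(a_{t_0},U)=h_\nu(a,U)>0$, and the same holds for every $a_t$ with $t\neq 0$ up to the positive factor $|t/t_0|$. Since leaf-wise measures are compatible with the $a_t$-ergodic decomposition of $\nu$ (see Section \ref{sec:leafmeasure}), at $\nu_\omega$-a.e.\ $x$ we have $(\nu_\omega)_x^U=\nu_x^U$, which is non-trivial; by Theorem \ref{thm:zero entropy} the entropy contribution $h_{\nu_\omega}(a_t,U)$ is positive, and therefore $h_{\nu_\omega}(a_t)\geq h_{\nu_\omega}(a_t,U)>0$. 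For $L$-recurrence, $\nu$ is invariant under the cyclic subgroup $\{c^n\}_{n\in\mathbb Z}\subset L$; since $c$ is a non-neutral class $\mathscr A$ element, $\mathrm{Ad}_{c^n}$ has eigenvalues of unbounded norm on the Lie algebra of $L$, so $\{c^n\}$ is unbounded in $L$, and Poincar\'e recurrence along $c^n$ forces $L$-recurrence.

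The main obstacle is the entropy transfer to $a_t$-ergodic components: the components $\nu_\omega$ are only known to be $a_t$-invariant a priori, and the $a$-action merely permutes them, so one cannot argue via a direct $a$-invariance of the $\nu_\omega$. The workaround is to route everything through the leaf-wise measures along $U$, which are preserved under ergodic decomposition, and then use Theorem \ref{thm:zero entropy} to convert non-triviality into positive entropy. Once the two Lindenstrauss hypotheses are in hand, the theorem concludes that $\nu$ is a linear combination of algebraic measures invariant under $H$.
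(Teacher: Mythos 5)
Your proposal is correct and follows essentially the same route as the paper: apply Theorem \ref{thm:main} to get $h_\nu(a,U)>0$, identify this with $h_\nu(a_{t_0},U)$ since $c$ centralizes $U$, deduce positive entropy of the $a_t$-ergodic components, obtain $L$-recurrence from invariance under the unbounded cyclic group generated by $c$, and invoke Lindenstrauss's theorem. The only difference is that you spell out the passage from $h_\nu(a_{t_0},U)>0$ to positive entropy of the ergodic components via leaf-wise measures and Theorem \ref{thm:zero entropy}, a step the paper states without elaboration.
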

\begin{proof}
Assume $\nu$ is nonzero, otherwise there is nothing to prove.
It is easy to see that $\nu$ is invariant under the subgroup
generated by $a_t$ for $t\in \mathbb R$ and $c$.
Since $c$ is from the class $\mathscr A$ and non-neutral, the cyclic
group generated by $c$ is unbounded.
 It follows from the invariance of $\nu$ under $c$  that $\nu$
is recurrent for the factor
$L$. Note that
 the right translation of $a_{t_0}$ also stretches
$U$ orbits. The conclusion of Theorem \ref{thm:main} implies that
for $\nu$ almost every $x$,
 \[
 h_{\nu}(a_{t_0}, U)(x)= h_{\nu}(a, U)(x)>0.
 \]
 So almost all the ergodic components
of $\nu$ with respect to the flow $a_t$
have positive entropy.
 Now the assumptions of
Theorem 1.1 of Lindenstruass \cite{L06} are satisfied
and the conclusion there is what we want.
\end{proof}

We say that
two  normal subgroups  $G_1 $ and $G_2$ of $G$ are  complementary
if $G=G_1G_2$ and $G_1\cap G_2$ is finite.
A lattice  $\Gamma$ of $G$ is said to be irreducible
if there are no {\color{red}infinite} complementary subgroups $G_1$ and
$G_2$ of $G$ such that
 $(G_1\cap \Gamma)\cdot (G_2\cap \Gamma)$
 has finite index in $\Gamma$.
If $L$ is a product of  simply connect algebraic groups
in the sense of \cite{M} I.1.4.9 or connected real Lie groups, then
 the natural projection of an irreducible lattice
  $\Gamma$ to $L$ has dense image. We remark here that
 the group $SL_m$ is simply connected.

\begin{lemma}\label{prop:uniq ergo}
Suppose $\Gamma$ is a lattice of $H
\times L_1$ where $L_1$ is a closed subgroup
$L$ and the natural projection of $\Gamma$
to $L_1$ has dense image.
Then $H$ acts uniquely ergodicly on
$\Gamma\backslash H\times L_1$ with  the probability Haar measure
as the unique $H$ invariant probability measure.
\end{lemma}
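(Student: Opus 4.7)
The plan is to combine Ratner's measure classification for the $H$-action with the density hypothesis on $p_L(\Gamma)$ to show that the Haar measure $m_X$ on $X = \Gamma\backslash(H\times L_1)$ is the unique $H$-invariant probability measure. Let $\nu$ be any $H$-invariant probability measure on $X$; by ergodic decomposition it suffices to treat the case where $\nu$ is $H$-ergodic.

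First I would invoke Ratner's measure rigidity theorem, or its $S$-arithmetic extension due to Ratner and to Margulis--Tomanov when $L_1$ has $p$-adic factors. Since $H=SL_2(\mathbb R)$ is generated by unipotent one-parameter subgroups, $\nu$ must be the normalized $M$-invariant probability measure on a closed orbit $\Gamma g M$, where $H\subseteq M \subseteq H\times L_1$ is closed and $\Gamma g M$ carries finite $M$-invariant volume. Because $H$ is a direct factor of $H\times L_1$, any such $M$ splits as $M = H\times M_L$ with $M_L$ a closed subgroup of $L_1$.

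Next I would observe that $\Gamma gM$ is automatically dense in $X$: its lift to $H\times L_1$ is
\[
\bigcup_{\gamma\in\Gamma}(\gamma_H g_H H,\,\gamma_L g_L M_L) \;=\; H \times p_L(\Gamma)g_L M_L,
\]
which is dense because $p_L(\Gamma)$ is dense in $L_1$. Ratner's conclusion that $\Gamma gM$ is closed then forces $\Gamma gM = X$, i.e.\ $p_L(\Gamma)M_L = L_1$. Since $p_L(\Gamma)$ is countable, $L_1$ is a countable union of cosets of $M_L$, and a Baire-category argument makes $M_L$ open in $L_1$.

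The remaining --- and principal --- step is to upgrade ``$M_L$ open'' to $M_L = L_1$. In the situations of interest $L_1$ is a connected real Lie group (or an analogous connected irreducible factor of $L$), for which the only open subgroup is the whole group; in a more general $S$-arithmetic setting one combines openness of $M_L$ with the finite $M$-invariant volume condition, which forces the projection of the lattice $g^{-1}\Gamma g\cap M$ to $L_1$ to fill $M_L$, so that $M_L = L_1$ follows again. Hence $M = H\times L_1$, the single closed orbit $\Gamma gM$ equals $X$, and $\nu = m_X$. The main obstacle in a fully general write-up is exactly this final upgrade in the disconnected/$p$-adic setting, where one must appeal to structural properties of closed subgroups of semisimple algebraic groups beyond the density–plus–Baire input.
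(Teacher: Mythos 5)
The paper gives no proof of this lemma (it is dismissed as ``standard''), so the comparison is with the standard argument, which is exactly the one you outline: apply Ratner's theorem (Margulis--Tomanov in the $S$-adic case) to an $H$-ergodic invariant measure $\nu$, write it as the homogeneous measure on a closed finite-volume orbit $\Gamma gM$ with $M=H\times M_L$, and use density of $p_L(\Gamma)$ in $L_1$ to force $\Gamma gM=X$. All of these steps are correct, including the splitting $M=H\times M_L$ (for $(h,l)\in M$ one has $(h,e)\in H\subseteq M$, hence $(e,l)\in M$) and the closed-plus-dense argument.

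The one criticism is that the step you single out as the ``principal'' and problematic one --- upgrading $M_L$ from open to all of $L_1$ --- is not actually needed, and your Baire-category detour can be deleted. Once you know $\Gamma gM=X$, the orbit map identifies $X$ with the homogeneous space $\bigl(g^{-1}\Gamma g\cap M\bigr)\backslash M$, and on such a quotient (with $M$ unimodular and the isotropy group discrete) the $M$-invariant Radon measure is unique up to scaling. Both $\nu$ and $m_X$ are $M$-invariant probability measures on $X$, so $\nu=m_X$ immediately, whether or not $M_L=L_1$. Thus the ``main obstacle'' you describe in the disconnected/$p$-adic setting dissolves, and your proof, so truncated, is complete.
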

The proof is standard and we omit it here.

\begin{corollary}\label{cor:limit}
Under the notation and assumption of Corollary \ref{cor:rigidity},
  let $L_1$ be the closure  in $L$ of the image of
  $\Gamma$ under the natural projection map.
Suppose in addition that
  $\Gamma$ is an irreducible lattice and
$\mu$ is supported on
   $X=\Gamma\backslash H\times L_1$,  then
$\nu=\nu(X)
m_{X}$. Moreover
 if
 $\mu$ has non-escape of mass for the flow $a_t$,
 then
 \begin{equation}\label{eq:cor limit}
\lim_{T\to \infty}\frac{1}{T}\int_0^T a_t \mu\, \mathrm d t
=m_{X}.
\end{equation}
\end{corollary}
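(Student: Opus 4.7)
The plan is to combine Corollary \ref{cor:rigidity} with the unique ergodicity in Lemma \ref{prop:uniq ergo}. Each $\mu_T := \frac{1}{T}\int_0^T a_t\mu\, \mathrm d t$ is a probability measure supported on $X$, so any weak-$*$ subsequential limit $\nu$ is a positive Radon measure on $X$ of total mass at most one. By Corollary \ref{cor:rigidity}, any non-zero such $\nu$ is a linear combination of algebraic probability measures on $X$ invariant under $H$.

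Next I invoke Lemma \ref{prop:uniq ergo}. By construction $L_1$ is the closure of the projection of $\Gamma$ to $L$, so the projection of $\Gamma$ to $L_1$ is dense; together with the hypothesis that $\Gamma$ is an irreducible lattice (whence in particular $\Gamma\cap L$ is finite, as required by Corollary \ref{cor:rigidity}), the lemma applies and gives that $H$ acts uniquely ergodically on $X$ with $m_X$ as the only $H$-invariant probability measure. Each algebraic component appearing in the decomposition of $\nu$ must therefore equal $m_X$, so $\nu = c\, m_X$ for some $c\ge 0$; evaluating both sides at $X$ identifies $c=\nu(X)$. This gives the first assertion (trivially true also when $\nu = 0$).

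For the second assertion, the non-escape of mass hypothesis means every weak-$*$ subsequential limit $\nu$ of $(\mu_T)_{T>0}$ satisfies $\nu(X)=1$, so by the first assertion every such limit equals $m_X$. Weak-$*$ sequential compactness of the set of sub-probability measures on $X$, together with the uniqueness of every subsequential limit, then upgrades this to convergence of the full net $\mu_T\to m_X$ as $T\to\infty$.

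I do not expect a substantial obstacle here: the genuine work has already been done in Theorem \ref{thm:main} (propagating positive entropy from $\mu$ to the limit $\nu$), in the measure rigidity of Lindenstrauss quoted just before Corollary \ref{cor:rigidity}, and in Lemma \ref{prop:uniq ergo}. The only care needed is the bookkeeping verification that the hypotheses of Corollary \ref{cor:rigidity} and Lemma \ref{prop:uniq ergo} are both in force, which follows immediately from irreducibility of $\Gamma$ and from the definition of $L_1$.
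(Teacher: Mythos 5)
Your argument is correct and follows essentially the same route as the paper: apply Corollary \ref{cor:rigidity} to get $H$-invariance of any subsequential limit, note the limit is supported on the closed set $X$, invoke Lemma \ref{prop:uniq ergo} (via density of the projection of $\Gamma$ to $L_1$) to identify it as $\nu(X)m_X$, and then use non-escape of mass plus arbitrariness of the subsequence for the full convergence. No discrepancies worth noting.
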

\begin{proof}
Corollary \ref{cor:rigidity} implies that $\nu$
is invariant under the first factor $H$
of $G$.
Since $X$ is closed and $\mu$ is supported on $X$,
the measure $\nu$ is supported on $X$.
Since the natural projection of
  $\Gamma$ to $L_1$ has dense image,
Lemma \ref{prop:uniq ergo} implies
that $\nu=\nu(X)m_{X}$.

If $\mu$ has non-escape of mass, then $\nu=m_{X}$.
Since the sequence  $(T_n)$ is arbitrary, (\ref{eq:cor limit})
follows.
\end{proof}

%We will see examples in Section \ref{sec:example} when we prove
%equidistribution of measures on $SL_2(\mathbb Z)\backslash
%SL_2(\mathbb R)$. Here we give an example of different flavor.

\begin{example}
Let $k$ be a totally real quadratic number field
with ring of integers $\mathfrak o$. Let
$\tau$
be the non-trivial Galois automorphism of $k$ over $\mathbb Q$.
For each $g\in SL_2(\mathfrak o)$ let $g^\tau$ be the componentwise
conjugation, then  $SL_2(\mathfrak o)$ embeds in
$G=SL_2(\mathbb R)\times SL_2(\mathbb R)$ through  the map
$g\to (g, g^{\tau})$ as an irreducible lattice.
Suppose $\gamma\in SL_2(\mathfrak o)$ is a diagonal matrix
 whose eigenvalues are not root of unity.

Take $X=SL_2(\mathfrak o)\backslash G$ and
 $\vartheta_X=SL_2(\mathfrak o)\in X$.
It is not hard to see that  the periodic orbit
$
\vartheta_X\left(
\begin{array}{cc}
1 & 0 \\
\mathbb R^2 &1
\end{array}
\right)
$
is invariant under $\gamma$. Moreover the action of $\gamma$
on this periodic orbit is isomorphic to a hyperbolic action
on $\mathbb T^2$.
We claim that if we put a positive entropy ergodic measure
of $\mathbb T^2$ on this periodic orbit to get a measure $\mu$,
then $\mu$ has positive entropy contribution along some
one-dimensional unipotent orbit. The idea is that there are two directions
for this $\mathbb R^2$ orbit and one of them is stretched
 by $\gamma$ and the
other is contracted.
%We will prove a similar result
%in Lemma \ref{lem:entropyto contribution}.
\end{example}

If $L=SL_2(\mathbb R)$, then
we could give a partial answer to
Conjecture 9.2 of Einsiedler \cite{E}.
\begin{corollary}\label{cor:conjec of Ein}
Let $\mu$ be an ergodic probability measure on
the irreducible quotient of $
\Gamma\backslash SL_2(\mathbb R)\times SL_2(\mathbb R)$
with respect to the group generated by
\[
b_t=\mathrm {Id}\times \mathrm{diag}(e^t, e^{-t})\quad t\in \mathbb R.
\]
If $\mu$ has
 positive entropy with respect to $b_t$ when $t\neq 0$ and has non-escape of mass
 for the flow $a_t$, then
 \[
\lim_{T\to \infty}\frac{1}{T}\int_0^T a_t \mu\, \mathrm d t
=m_{\Gamma\backslash G}.
\]
\end{corollary}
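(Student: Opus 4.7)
The plan is to apply Theorem \ref{thm:main} to the $b_t$-invariant measure $\mu$ in order to propagate positive entropy to any weak-$*$ limit $\nu$, and then to invoke Theorem~1.1 of \cite{L06} with the two $SL_2(\mathbb R)$ factors of $G$ interchanged.

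By non-escape of mass I can pass to a subsequential weak-$*$ limit $\nu = \lim_n \tfrac{1}{T_n}\int_0^{T_n} a_t \mu \, \mathrm d t$, which is a probability measure. The Cesaro construction makes $\nu$ invariant under $a_t$, and the commutation $a_t b_s = b_s a_t$ together with the $b_s$-invariance of $\mu$ shows that $\nu$ is also $b_s$-invariant. To apply Theorem \ref{thm:main}, I choose $s_0 > 0$ for which $\mu$ is ergodic under the single transformation $b_{s_0}$ (true for a.e.\ $s_0$ since $\mu$ is $b_t$-flow-ergodic), and set $a = \mathrm{Id}\times b_{s_0}$. Then $a$ is a non-neutral class $\mathscr A$ element commuting with $a_t$, and the one-dimensional unstable horospherical subgroup $U\subset L$ of $b_{s_0}$ is centralized (hence normalized) by $a_t$. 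The hypothesis $h_\mu(b_{s_0})>0$ gives $h_\mu(a,U)>0$, so Theorem \ref{thm:main} yields $h_\nu(a,U)(x)>0$ for $\nu$-almost every $x$.

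To conclude, I invoke Theorem~1.1 of \cite{L06} in its natural form but with the two $SL_2(\mathbb R)$ factors of $G$ interchanged, so that $b_t$ plays the role of the diagonal flow and the original first factor plays the role of the auxiliary factor providing recurrence. The $a_t$-invariance of $\nu$ supplies the required recurrence; integrating the positive entropy contribution $h_\nu(a,U)>0$ over the $b_{s_0}$-ergodic decomposition of $\nu$ shows that $\nu$-almost every ergodic component has positive $b_{s_0}$-entropy. The theorem then gives that $\nu$ is a convex combination of algebraic probability measures invariant under $L$. Combining this $L$-invariance with the $a_t$-invariance of $\nu$ and the irreducibility of $\Gamma$ forces $\nu = m_{\Gamma\backslash G}$: by Ratner--Mozes, every $L$-orbit in $\Gamma\backslash G$ is either closed or dense, and no closed $L$-orbit can be $a_t$-invariant since $a_t\notin L$, so the only $L$-invariant and $a_t$-invariant probability measure is Haar. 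Uniqueness of the subsequential limit then upgrades to convergence of the full Cesaro average.

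The hard part will be the role-swapped application of Lindenstrauss's theorem and the integration of the entropy contribution over the $b_{s_0}$-ergodic decomposition. The swap is legitimate because the hypothesis $\Gamma\cap L$ finite is symmetric in this setting (irreducibility of $\Gamma$ in $SL_2(\mathbb R)\times SL_2(\mathbb R)$ forces both $\Gamma\cap H$ and $\Gamma\cap L$ to be finite), but writing it out carefully requires relabeling the data in the statement of Theorem~1.1 of \cite{L06}; the entropy integration is standard via the variational properties of leaf-wise measures reviewed in Section \ref{sec:leafmeasure}, but needs a brief justification.
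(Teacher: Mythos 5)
Your proposal is correct and follows essentially the same route the paper sketches: propagate positive entropy to the limit via (a variant of) Theorem \ref{thm:main} with $a=b_{s_0}$ and $U$ the unstable direction of $b_{s_0}$ in the second factor, then apply Lindenstrauss's theorem with the roles of the two $SL_2(\mathbb R)$ factors interchanged, and finish by irreducibility/unique ergodicity as in Corollary \ref{cor:limit}. Your device of picking $s_0$ with $b_{s_0}$ ergodic (valid for all but countably many $s_0$ by the point-spectrum argument) is a clean alternative to the paper's remark that the proof of Theorem \ref{thm:main} adapts to flow-ergodic $\mu$; both resolve the same issue.
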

\noindent
The proof is similar to that of Corollary \ref{cor:limit}
but uses a variant of Theorem \ref{thm:main}.
In above corollary the measure $\mu$ is ergodic under the
flow $b_t$, whereas in Theorem \ref{thm:main} the given
measure $\mu$ is invariant {\color{red}and ergodic with respect to a cyclic group}.
It is not hard to see that the proof of Theorem \ref{thm:main}
still works  in this case to give positive entropy of  ergodic components
of the limit measure.

\section{Convergence of measures on
$SL_2(\mathbb Z)\backslash SL_2(\mathbb R)$}
\label{sec:example}

Let $n>1$ be a positive integer and let $\lambda$ be a probability
measure on $[0,1]=\mathbb Z\backslash \mathbb R$ such that
$\lambda$ is $\times n$-invariant and ergodic with positive
entropy.
Recall that for $s\in \mathbb R$,
\begin{equation}\label{eq:ux}
u(s)=
\left(\begin{array}{cc}
1 & 0 \\
s & 1
\end{array}
\right)\in SL_2(\mathbb R).               \notag
\end{equation}
Let $Y=SL_2(\mathbb Z)
\backslash SL_2(\mathbb R)$ and $\vartheta _Y=SL_2(\mathbb Z)\in Y$.
Let
$\tilde \mu$ be the measure on $Y$ such that for any $f\in C_c(Y)$
\begin{equation}\label{eq:induceY}
\int_Y f \,\mathrm d \tilde\mu=\int_0^1 f(\vartheta_Y u(s))\,\mathrm d\lambda(s).
\end{equation}
We also use $\times n$ to denote the measure preserving map on
$(Y, \tilde \mu)$ that sends $\vartheta_Y u(s)$ to $\vartheta_Y u(ns)$.

To guarantee that there is  non-escape of
mass under diagonal  flow $a_t=(e^t, e^{-t})$
 we need some additional assumption on the measure $\lambda$.
One possible choice is to assume that $\lambda$ is friendly, which is defined
in \cite{KLW}. Examples of friendly measures consist of Hausdorff 
measures supported on  self-similar sets such as Cantor sets.
The following theorem is a general version of Theorem \ref{thm:intro2}.
\begin{theorem}\label{thm:equi:sl2}
Let $\lambda$ be  friendly
 probability
measure on $[0,1]$. Suppose that $\lambda$ is
$\times n$-invariant  and ergodic
  with positive
entropy.
Let
$\tilde \mu$ be the measure on $Y$ induced from
$\lambda$ by (\ref{eq:induceY}), then
\[
\lim_{T\to \infty}\frac{1}{T}\int_0^T a_t\tilde \mu \, \mathrm d t=m_Y.
\]
\end{theorem}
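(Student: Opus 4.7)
The plan is to lift $(Y,\tilde\mu,a_t)$ to the $S$-arithmetic setup of Theorem \ref{thm:intro0} and then push the equidistribution back to $Y$. Writing $n=p_1^{\sigma_1}\cdots p_l^{\sigma_l}$, set $S=\{\infty,p_1,\ldots,p_l\}$ and form $X=PGL_2(\mathbb Z[1/p_1,\ldots,1/p_l])\backslash PGL_2(\mathbb Q_S)$. The classical identification
\[
\mathbb R/\mathbb Z \;\simeq\; \mathbb Z[1/p_1,\ldots,1/p_l]\,\backslash\,\mathbb Q_S\,/\,\textstyle\prod_i \mathbb Z_{p_i},
\]
which holds because any element of $\mathbb Z[1/p_1,\ldots,1/p_l]$ integral at every $p_i$ must lie in $\mathbb Z$, allows me to lift $\lambda$ to a probability measure $\mu$ on $X$ supported on $\vartheta_X u(\mathbb Q_S)$ whose image under the natural factor map $X\to Y$ (obtained by collapsing the compact $p$-adic factors) is $\tilde\mu$.

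Next I would verify the three hypotheses of Theorem \ref{thm:intro0} for $\mu$. Right translation by $g:=\mathrm{diag}(n,1)^{l+1}$ sends $u(s)\mapsto u(ns)$ diagonally across $\mathbb Q_S$, and under the identification above this matches the $\times n$ map on $[0,1]$; therefore $\mu$ is $g$-invariant and ergodic because $\lambda$ is, and the entropy contribution of $\mu$ along the one-dimensional unipotent direction $u(\cdot)$ on which it is supported inherits positivity from $h_\lambda(\times n)>0$.

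The main technical obstacle is establishing non-escape of mass of $\mu$ under the real flow $a_t$. Since $a_t$ acts trivially on the $p$-adic factors and natural representatives of $\mu$-typical points lie in the compact group $\prod_i PGL_2(\mathbb Z_{p_i})$, escape of mass can only occur in the real coordinate, so it suffices to verify non-escape of mass for $\tilde\mu$ on $Y$. By the Dani correspondence this amounts to showing that the trajectory $\vartheta_Y u(s)a_t$ spends a positive proportion of time in a fixed compact subset of $Y$ for $\lambda$-almost every $s$, which is exactly the quantitative non-divergence guaranteed by friendliness of $\lambda$, as established in \cite{KLW}.

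With these hypotheses in hand, Theorem \ref{thm:intro0} asserts that every subsequential limit of $\frac{1}{T}\int_0^T a_t\mu\,\mathrm dt$ is a $PGL_2(\mathbb R)^\circ$-invariant homogeneous probability measure on $X$. Pushing forward to $Y$ via the factor map, which is equivariant under $a_t$, every subsequential limit of $\frac{1}{T}\int_0^T a_t\tilde\mu\,\mathrm dt$ is an $SL_2(\mathbb R)^\circ$-invariant probability measure on $Y$, hence equals $m_Y$ by unique ergodicity of the $SL_2(\mathbb R)$-action on the finite-volume space $Y$. Since every subsequential limit coincides with $m_Y$, the full time average converges to $m_Y$, as claimed.
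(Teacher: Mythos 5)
Your overall strategy --- lift to the $S$-arithmetic space $X$, apply Theorem \ref{thm:intro0}, and push forward through the compact-fibered factor map $\eta:X\to Y$ --- is exactly the paper's, and your treatment of non-escape of mass (friendliness of $\lambda$ on $Y$ plus compactness of the fibers of $\eta$) and of the final push-forward step is fine. But there is a genuine gap at the very first step: you assert that the lift $\mu$ of $\lambda$ is invariant and ergodic under $g=\mathrm{diag}(n,1)^{l+1}$ ``because $\lambda$ is.'' This is false for the natural lift, namely the push-forward $\mu_1$ of $\lambda$ under $s\mapsto \vartheta_X u(s,0,\ldots,0)$, and the paper says so explicitly. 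The reason is that your identification $\mathbb R/\mathbb Z\simeq \mathbb Z[1/p_1,\ldots,1/p_l]\backslash\mathbb Q_S/\prod_i\mathbb Z_{p_i}$ involves the extra quotient by $\prod_i\mathbb Z_{p_i}$; the orbit $\vartheta_X u(\mathbb Q_S)$ itself is the solenoid $\mathbb Z[1/p_1,\ldots,1/p_l]\backslash\mathbb Q_S$, on which right translation by $g$ acts as the \emph{invertible} automorphism $v\mapsto nv$. The real line injects into this solenoid as a winding line (two reals $s,s'$ give the same point only if $s=s'$, since the $p$-adic coordinates force the difference to vanish), so for $s\in[0,1]$ the point $\vartheta_X u(ns,0,\ldots,0)$ is \emph{not} identified with $\vartheta_X u(ns-k,0,\ldots,0)$ for $k\in\mathbb Z\setminus\{0\}$. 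Hence $g\mu_1$ is supported on the image of $[0,n]$ in the winding line, not on that of $[0,1]$, and $g\mu_1\neq\mu_1$; only after the further quotient by $\prod_i\mathbb Z_{p_i}$ does the action match $\times n$ on the circle.

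Fixing this is the content of the paper's Lemma \ref{lem:key}: one takes a weak$^*$ limit $\mu_2$ of the Ces\`aro averages $\frac1N\sum_{i=0}^{N-1}g^i\mu_1$ (no mass is lost because everything is supported on the compact set $\Gamma V\cong\Lambda\backslash V$), checks that $\eta\mu_2=\tilde\mu$ and that $h_{\mu_2}(g)>0$ via the factor map onto $(Y,\tilde\mu,\times n)$, and then passes to an ergodic component of $\mu_2$ with positive entropy (using ergodicity of $\tilde\mu$ to see that almost every component still projects to $\tilde\mu$). Equivalently, one can invoke the natural-extension description of the solenoid over the circle. Without some such argument your proposal has no invariant, ergodic, positive-entropy measure on $X$ to which Theorem \ref{thm:intro0} can be applied, so this step needs to be supplied rather than asserted.
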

To prove Theorem \ref{thm:equi:sl2}, we need first prove
Theorem \ref{thm:intro0}.
For a field $k$ of characteristic zero, let $PGL_2(k)$ be the $k$ points of the
algebraic group $PGL_2$. The natural map
$\pi_k: GL_2(k)\to PGL_2(k)$ is surjective and
the kernel consists of diagonal matrices of $GL_2(k)$.
For a ring $R\subset k$, we use $PGL_2(R)$ to denote the image
of $GL_2(R)$ under $\pi_k$.

Suppose  the
prime decomposition of $n=p_1^{\sigma_1} \cdots p_l^{\sigma_l}$.
Let
\[
S=\{\infty, p_1, \ldots, p_l\},\quad
\mathbb Q_S=\prod_{v\in S}\mathbb Q_v,\quad
K=\prod_{p\in S, p\neq \infty}PGL_2(\mathbb Z_{p}).
\]
Take $\Gamma=PGL_2(\mathbb Z[\frac{1}{p_1}, \ldots, \frac{1}{p_l}])$
 which sits
 diagonally in the group
$G=PGL_2(\mathbb Q_S)$
as an irreducible lattice.
It is not hard to see that
$$X=\Gamma\backslash G=
PGL_2(\mathbb Z) \backslash
PGL_2(\mathbb R)\times K.$$
We can define a factor map
$
\eta: X\to Y
$ which sends $(g, h)\mod \Gamma$ in $X$ with $g\in PGL_2(\mathbb R)$
and $h\in K$ to  $g\mod SL_2(\mathbb Z)$ in $Y$.
For $\mathbf v\in  \mathbb Q_S$, let
$u(\mathbf v)=\left(
\begin{array}{cc}
1 & 0 \\
\mathbf v & 1
\end{array}
\right)$.
 Let $V$ be the unipotent subgroup
of $G$ generated by $u(\mathbf v)$ for
 $\mathbf v\in  \mathbb Q_S$.

\begin{lemma}\label{lem:entropyto contribution}
Let $\mu$ be an $a$-invariant and ergodic  probability measure on $X$ with
positive entropy. Suppose  $\mu$ is supported on $\vartheta_X u(\mathbb Q_S)$,
then
$h_\mu(a, U)>0$
where $U=\{u(s):s\in \mathbb R\}$.
\end{lemma}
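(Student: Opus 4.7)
The plan is to identify the unstable horospherical subgroup $G^+$ of $a$, show that the leaf-wise measure $\mu_x^{G^+}$ is concentrated on $U$, and conclude via the identity $h_\mu(a)=h_\mu(a,G^+)$ for class $\mathscr A$ elements.

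At each place $v\in S$ the $v$-component of $a$ equals $\mathrm{diag}(n,1)\in PGL_2(\mathbb Q_v)$, whose adjoint action on $\mathfrak{sl}_2$ has eigenvalues $1,\,n,\,1/n$. Since $|1/n|_\infty<1$ while $|1/n|_{p_i}>1$, the paper's convention (in which $G^+$ is the subgroup contracted by $a$ under conjugation) yields
\[
G^+ \;=\; U\;\times\;\prod_{i=1}^l U^+_{p_i},
\]
where $U^+_{p_i}$ is the upper-triangular unipotent subgroup of $PGL_2(\mathbb Q_{p_i})$. A direct check gives $V\cap G^+=U$: at each $p_i$-place, a lower-triangular $u(s)$ coincides with an upper-triangular element only if both are the identity.

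Since $\mathrm{supp}(\mu)=\vartheta_X V$ is a single $V$-orbit, for $x=\vartheta_X g\in\mathrm{supp}(\mu)$ the local $G^+$-leaf through $x$ meets $\mathrm{supp}(\mu)$ exactly in the $U$-slice $\vartheta_X g U$. Using the linearized coordinates of Section \ref{sec:keytheorem}, one constructs a countably generated sigma-ring $\mathcal A$ subordinate to $G^+$ near $\vartheta_X$ whose atoms intersect $\vartheta_X V$ precisely in such $U$-slices. The conditional measures $\mu_x^{\mathcal A}$ are therefore supported on these slices, and by the correspondence between conditional and leaf-wise measures reviewed in Section \ref{sec:leafmeasure}, $\mu_x^{G^+}$ is supported on $U\subset G^+$ for $\mu$-a.e.\ $x$.

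With the product metric on $G^+$ chosen so that $B_r^{G^+}\cap U=B_r^U$, we get $\mu_x^{G^+}|_U$ proportional to $\mu_x^U$ with matching normalizations on the unit ball, hence $h_\mu(a,G^+)(x)=h_\mu(a,U)(x)$ $\mu$-almost surely. Combined with the standard identity $h_\mu(a)=h_\mu(a,G^+)$ for class $\mathscr A$ elements (a consequence of Margulis-Ruelle/Pesin entropy theory along the unstable foliation), the hypothesis $h_\mu(a)>0$ gives $h_\mu(a,U)>0$ as required. The delicate step is the transversality/support argument, namely verifying that a $G^+$-subordinate sigma-ring cuts $\vartheta_X V$ precisely along $U$-slices; this rests on the elementary identity $V\cap G^+=U$ noted above, and the rest is bookkeeping in the leaf-wise-measure framework of Section \ref{sec:leafmeasure}.
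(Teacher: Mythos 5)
Your proof is correct and is essentially the paper's argument: the paper observes that $\mathrm{supp}(\mu)=\Gamma V\cong (V\cap\Gamma)\backslash V$ is a compact $a$-invariant set on which the only expanding foliation of $a$ is the $U$-orbit foliation, which is exactly your identity $V\cap G^+=U$ recast by restricting the system to the solenoid rather than by localizing the $G^+$-leaf-wise measures in the ambient space. Your version just spells out in the leaf-wise-measure language what the paper asserts in one line, so no substantive difference.
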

\begin{proof}

Let 
$\Lambda=V\cap \Gamma$, then $\Lambda$ is a lattice of $V$
and $\Gamma V\cong \Lambda \backslash V$ is
an $a$-invariant compact subset of $X$.
The measure $\mu$ can be viewed as a probability measure on
 $ \Lambda \backslash V$.
 Note that there is only one expanding foliations for
 $a$ on $\Lambda \backslash V$,
 namely, the orbits of
 $U$. Since $h_\mu(a)>0$ and $\mu$ is  ergodic under the map $a$,
 we have
 $h_\mu(a, U)>0$.
\end{proof}

\begin{proof}[Proof of Theorem \ref{thm:intro0}]
Let $L=\prod_{v\in S, v\neq \infty}SL_2(\mathbb Q_v)$.
It is not hard to see that there is an irreducible lattice
$\Gamma_0$ of $G_0=SL_2(\mathbb R)\times L$ such that
$X\cong X_0=\Gamma_0\backslash G_0$ under the natural map
$G_0\to G$. Let $L_1$ be the closure in $L$ of the image
of $\Gamma_0$ under the natural projection map.
Let
$b=\mathrm {diag}(\sqrt n, 1/\sqrt n)\times \mathrm {diag}(n,1)^l\in G_0$
and let $\mu_0$ be the measure on $X_0$ correspond to $\mu$ on $X$.
Then
the system $(X, \mu , a, a_t)$ is isomorphic to the system
$(X_0,\mu_0, b, a_t )$.

Note that $\mu_0$ is supported on
$\Gamma_0 (SL_2(\mathbb R)\times L_1)$.
The assumption that $h_\mu(a)>0$ implies $h_{\mu_0}(b, U)>0$ according
to Lemma \ref{lem:entropyto contribution}. Since $\mu$ has non-escape
of mass for the flow $a_t$, the  conclusion follows from
Corollary \ref{cor:limit}.
\end{proof}

Now we return to the proof of Theorem \ref{thm:equi:sl2}.
The measure $\lambda$ on $[0,1]$ induces a measure $\mu_1$
on $X$ such that
 for
any $f\in C_c(X) $,
\[
\int_{X} f \,\mathrm d \mu_1=\int_0^1 f(\vartheta_X u(s))\,\mathrm d\lambda(s).
\]
Then the pushforward
 $\eta\mu_1=\tilde \mu$.
Note that
$\mu_1$ is not invariant for the right translation of
$a=\mathrm{diag}(n,1)^{l+1}\in G$.
But we can still guarantee   that
there is an $a$-invariant and ergodic
 measure on $X$ that projects to $\tilde \mu$.

\begin{lemma}\label{lem:key}
There exists an $a$-invariant and ergodic probability measure $\mu$ on $X$
supported on $\vartheta_X u(\mathbb Q_S)$ where $\vartheta_X=\Gamma\in X$
 such
that $\eta \mu=\tilde \mu$ and $h_\mu(a)>0$.
\end{lemma}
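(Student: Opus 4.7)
The plan is to identify the orbit $\vartheta_X u(\mathbb{Q}_S)$ with a compact solenoid on which right translation by $a$ becomes a continuous group automorphism, and then to recognize the resulting system, together with the factor map $\eta$, as the natural invertible extension of $([0,1], \times n)$; the desired $\mu$ will then be produced by Rokhlin's natural extension theorem.

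Set $\Lambda = \mathbb{Z}[1/p_1,\ldots,1/p_l]$ embedded diagonally in $\mathbb{Q}_S$. The stabiliser of $\vartheta_X$ under the right action of $V = \{u(\mathbf{v}) : \mathbf{v} \in \mathbb{Q}_S\}$ equals $V \cap \Gamma = \{u(w) : w \in \Lambda\}$, so the orbit $\vartheta_X V$ is canonically identified with the compact abelian group $\mathfrak{S} = \Lambda \backslash \mathbb{Q}_S$. Since $a = \mathrm{diag}(n,1)^{l+1}$ is the diagonal embedding of an element of $GL_2(\Lambda)$, it lies in $\Gamma$ and therefore fixes $\vartheta_X$; the computation $a^{-1} u(\mathbf{v}) a = u(n\mathbf{v})$ then shows that right translation by $a$ acts on $\mathfrak{S}$ as the group automorphism $[\mathbf{v}] \mapsto [n\mathbf{v}]$, which is invertible because $n$ is a unit in $\Lambda$.

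The analogous identification gives $\vartheta_Y u(\mathbb{R}) \cong \mathbb{Z}\backslash \mathbb{R}$, and under these two identifications the restriction of $\eta$ to $\vartheta_X V$ becomes the canonical surjection $\pi : \Lambda \backslash \mathbb{Q}_S \to \mathbb{Z}\backslash \mathbb{R}$ coming from projection onto the archimedean coordinate. A direct check shows that $\pi$ intertwines the automorphism $[\mathbf{v}] \mapsto [n\mathbf{v}]$ with the $\times n$ endomorphism of $\mathbb{Z}\backslash \mathbb{R}$, and exhibits $(\mathfrak{S}, \times n, \pi)$ as the inverse limit of $([0,1], \times n)$, that is, as its natural invertible extension.

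Once this structure is in place, the proof is essentially Rokhlin's theorem on natural extensions applied to the ergodic $\times n$-invariant probability measure $\lambda$ on $[0,1]$ (equivalently, $\tilde\mu$ on $Y$). That theorem produces a $\times n$-invariant probability measure $\hat\lambda$ on $\mathfrak{S}$ that is ergodic, projects to $\lambda$ under $\pi$, and satisfies $h_{\hat\lambda}(\times n) = h_\lambda(\times n) > 0$. Transporting $\hat\lambda$ back to $X$ through the identification $\vartheta_X V \cong \mathfrak{S}$ yields a measure $\mu$ supported on $\vartheta_X u(\mathbb{Q}_S)$ that is $a$-invariant and ergodic, satisfies $\eta_* \mu = \tilde\mu$, and has $h_\mu(a) > 0$, as required. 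The main technical nuisance is verifying that $\eta|_{\vartheta_X V}$ really corresponds to the projection $\pi$: this requires tracking the isomorphism $X \cong PGL_2(\mathbb{Z}) \backslash (PGL_2(\mathbb{R}) \times K)$, which itself rests on the strong approximation identity $\Gamma K = G$, through the reduction of a general $u(\mathbf{v})$ modulo $\Lambda$. Once that identification is made, everything else reduces to the standard description of the solenoid $\Lambda \backslash \mathbb{Q}_S$ as the natural extension of the circle endomorphism $\times n$, and the existence, ergodicity, projection property, and entropy bound of the lift are all immediate from natural extension theory.
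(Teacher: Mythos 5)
Your proof is correct, but it takes a genuinely different route from the paper's. The paper argues softly: it pushes $\lambda$ forward to a measure $\mu_1$ on $\vartheta_X u(\mathbb Q_S)$, takes a weak$^*$ limit $\mu_2$ of the averages $\frac{1}{N}\sum_{i=0}^{N-1}a^i\mu_1$ (a probability measure because $\Gamma V\cong (V\cap\Gamma)\backslash V$ is compact and $a$-invariant), observes that $\eta\mu_2=\tilde\mu$ and hence $h_{\mu_2}(a)\ge h_{\tilde\mu}(\times n)>0$ by the factor entropy inequality, and finally extracts a positive-entropy ergodic component (one must also note that almost every ergodic component still projects to $\tilde\mu$, since $\tilde\mu$ is $\times n$-ergodic). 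You instead identify $\vartheta_X u(\mathbb Q_S)$ with the solenoid $\Lambda\backslash\mathbb Q_S$ for $\Lambda=\mathbb Z[1/p_1,\ldots,1/p_l]=\mathbb Z[1/n]$, check that $a\in\Gamma$ and $a^{-1}u(\mathbf v)a=u(n\mathbf v)$ so that $a$ acts as the automorphism $[\mathbf v]\mapsto[n\mathbf v]$, match $\eta$ with the archimedean projection via the presentation $\Lambda\backslash\mathbb Q_S\cong\mathbb Z\backslash(\mathbb R\times\prod_i\mathbb Z_{p_i})$, and then invoke Rokhlin's natural extension theorem; the inverse-limit claim is confirmed by Pontryagin duality, since $\widehat{\Lambda\backslash\mathbb Q_S}\cong\mathbb Z[1/n]$ is the direct limit of $\mathbb Z$ under multiplication by $n$. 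All of these steps are sound. Your route buys sharper conclusions: the lift is the unique $a$-invariant measure projecting compatibly to $\lambda$, it is automatically ergodic (no passage to components needed), and $h_\mu(a)=h_\lambda(\times n)$ exactly rather than merely positive. The paper's route buys robustness: it uses only compactness of the orbit and the entropy inequality for factors, not the special inverse-limit structure of the fibers, and the same averaging device reappears elsewhere in the argument.
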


\begin{proof}
Let $\mu_1$ be as above and let
 $\mu_2$
be a limit point in the weak$^*$ topology for the sequence
$\frac{1}{N}\sum_{i=0}^{N-1} a^i \mu_1 $.
It is clear that $\mu_2$ is $a$-invariant.
We claim that $\mu_2$ is a  probability measure.
 Since
$
\Lambda =V \cap
\Gamma
$
 is a lattice of $V$.
It follows that
 $\Gamma V\cong \Lambda \backslash V$ is
an $a$-invariant compact subset of $X$. Since each $a^i\mu_1$ is supported
on  $\Gamma V$, the measure $\mu_2$ is a probability measure.

Now $\eta$ is a factor map from $(X, \mu_2, a)$ to
$(Y, \tilde \mu , \times n)$ and $h_{\tilde \mu}(\times n)>0$,
so $h_{\mu_2}(a)>0$. Take $\mu $ to be some ergodic component of
$\mu_2$ with positive entropy, then $\mu$ satisfies the
requirement of the lemma.
\end{proof}

\begin{proof}[Proof of Theorem \ref{thm:equi:sl2}]
By Lemma \ref{lem:key}, there exists an $a$-invariant and
ergodic probability measure
$\mu$ on $X$
such that $\eta \mu=\tilde \mu$ and $h_\mu(a)>0$.
Moreover,
the following diagram
\[
\xymatrix{
(X,\mu) \ar[r]^{a_t} \ar[d]_{\eta} & (X, \mu) \ar[d]^{\eta} \\
(Y,\tilde \mu) \ar[r]_{a_t}  & (Y,\tilde \mu)
}
\]
commutes.
 Since $\lambda $ is
friendly, there is non-escape of mass for $\tilde \mu$ under the flow
$a_t$. The commutative diagram above and the fact
that fibers of $\eta$ are compact imply that there is non-escape
of mass for $\mu$ under the flow $a_t$. Therefore $\mu$ satisfies
all the assumptions of Theorem \ref{thm:intro0}. So $\mu$
is equidistributed
on average for the flow $a_t$ with respect to an $H$ invariant
homogeneous measure.
Therefore $\tilde \mu$ is equidistributed on average for the flow
$a_t$ on $Y$.
\end{proof}

\section{Convergence of measures on
 $SL_3(\mathbb Z)\backslash SL_3(\mathbb R) $}
 \label{sec:converSL3}

 In the next two sections let $G= SL_3(\mathbb R)$,
 $\Gamma= SL_3(\mathbb Z)$ and
  $Z=\Gamma\backslash G$.
  Other notations are the same as in Theorem \ref{thm:intronew}
  which we prove right now.
 Let $\nu=\lim \mu_n$ and we want to show that $\nu$ is a
 scalar multiple of the probability Haar measure.
 Without loss of generality we assume that $\nu\neq 0$.

 For the hyperbolic matrix $\gamma\in SL_2(\mathbb Z)$, let $
g_\gamma=\left(
\begin{array}{cc}
\gamma & 0 \\
0 &      1
\end{array}
\right).
$
Then $\mu$ is $g_\gamma$-invariant and  has positive entropy.
Similar to Lemma \ref{lem:entropyto contribution}, we have
\begin{lemma}\label{lem:entropyto contriSL3}
For the probability measure  $\mu$
 in
Theorem \ref{thm:intronew}, then there exists
a one dimensional subgroup of the unstable horospherical
subgroup of $g_\gamma$ such that  $h_\mu(g_\gamma, U)(x)>0$
for $\mu$ almost every $x$.
\end{lemma}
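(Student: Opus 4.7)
The plan is to mirror the proof of Lemma \ref{lem:entropyto contribution}, exploiting that $\mu$ is concentrated on a single compact $g_\gamma$-invariant orbit on which $g_\gamma$ acts as a hyperbolic toral automorphism with a unique one-dimensional unstable direction.

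First I would set up the geometry. A $3\times 3$ block computation gives $g_\gamma^{-1}u(\mathbf{s})g_\gamma = u(\mathbf{s}\gamma)$, so $g_\gamma$ normalizes $u(\mathbb{R}^2)$ and, since $g_\gamma\in\Gamma$, preserves the orbit $\vartheta_Z u(\mathbb{R}^2)$. Because $u(\mathbb{R}^2)\cap\Gamma = u(\mathbb{Z}^2)$ is a lattice in $u(\mathbb{R}^2)$, this orbit is the compact torus $u(\mathbb{Z}^2)\backslash u(\mathbb{R}^2)\cong\mathbb{T}^2$, on which the induced $g_\gamma$-action is the hyperbolic automorphism $\mathbf{s}\mapsto\mathbf{s}\gamma$. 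Via \eqref{eq:sl3induced} the system $(\vartheta_Z u(\mathbb{R}^2),\mu,g_\gamma)$ is measurably isomorphic to $(\mathbb{T}^2,\lambda,\gamma)$, so $\mu$ is $g_\gamma$-ergodic with $h_\mu(g_\gamma)=h_\lambda(\gamma)>0$. Moreover, the adjoint action of $g_\gamma$ on the abelian Lie algebra of $u(\mathbb{R}^2)$ is linearly conjugate to the action of $\gamma$ and hence has real eigenvalues $\alpha^{\pm 1}$ with $|\alpha|>1$. I let $U$ be the intersection of $u(\mathbb{R}^2)$ with the unstable horospherical subgroup of $g_\gamma$; then $U$ is a one-parameter subgroup, normalized by $g_\gamma$, and is the unique direction tangent to $\mathrm{supp}\,\mu$ that is expanded under right translation by $g_\gamma$.

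Next I would conclude $h_\mu(g_\gamma,U)(x)>0$ for $\mu$-a.e.\ $x$. Since $\mu$ lives on the compact $u(\mathbb{R}^2)$-orbit, $\mu_x^U$ is supported on the $U$-orbit of $x$ and, under the identification with $\mathbb{T}^2$, coincides with the conditional measure of $\lambda$ along the unstable foliation of $\gamma$. If $h_\mu(g_\gamma,U)$ vanished on a set of positive measure, Theorem \ref{thm:zero entropy} would force those conditionals to be atomic there, which by the standard Ledrappier-type characterization of zero entropy for hyperbolic toral automorphisms would give $h_\lambda(\gamma)=0$, contradicting our hypothesis. Hence $h_\mu(g_\gamma,U)$ is positive on a set of positive measure, and the $g_\gamma$-invariance of this function combined with the ergodicity of $\mu$ under $g_\gamma$ upgrades the conclusion to $\mu$-almost everywhere.

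The main obstacle is this last step — matching the leaf-wise measures on $Z$ with the unstable conditionals on $\mathbb{T}^2$ and invoking a zero-entropy dichotomy for hyperbolic toral automorphisms. This is precisely the ingredient that the proof of Lemma \ref{lem:entropyto contribution} dispatches in the one-line remark ``there is only one expanding foliation on the compact $V$-orbit''; the reasoning transfers to the present situation because the embedded torus is itself a homogeneous space for the abelian group $u(\mathbb{R}^2)$ on which $g_\gamma$ acts by a hyperbolic linear map, so the only expanding homogeneous direction on $\mathrm{supp}\,\mu$ is $U$.
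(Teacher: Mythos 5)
Your proposal is correct and follows essentially the same route as the paper: the paper proves this lemma only by reference to Lemma \ref{lem:entropyto contribution}, whose argument is exactly the one you spell out — $\mu$ lives on the compact $g_\gamma$-invariant torus $\vartheta_Z u(\mathbb{R}^2)\cong u(\mathbb{Z}^2)\backslash u(\mathbb{R}^2)$, where $g_\gamma$ acts as the hyperbolic automorphism $\gamma$ with a single expanding direction $U$, so the positive entropy $h_\lambda(\gamma)$ must be carried entirely by the $U$-contribution, and ergodicity makes $h_\mu(g_\gamma,U)$ a.e.\ constant. Your extra detail (the conjugation computation and the zero-entropy dichotomy via Theorem \ref{thm:zero entropy}) is a valid fleshing-out of the paper's one-line remark, not a different method.
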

It follows from Theorem \ref{thm:main} that the average
 measure $\nu$ in Theorem \ref{thm:intronew}
 has positive entropy contribution along $U$ orbits almost surely.
So
a typical ergodic component of $\nu$ with respect to
the group $\Lambda$ generated by $a_t$ and $g_\gamma$,
say probability measure $ \nu_1$,
satisfies  $h_{ \nu_1}(g_\gamma)>0$.

Let $A$ be the Cartan subgroup of $G$ containing $\Lambda$, then
\begin{equation}\label{eq:ergo compo}
\nu_2=\int _{\Lambda\backslash A} a \nu_1
\, \mathrm d m_{\Lambda\backslash A}(a)
\end{equation}
is an $A$-invariant probability measure with all the
$A$-ergodic components having positive entropy with respect
to $g_\gamma$.
Now we need to use the measure rigidity theorem
 of Einsiedler,Katok and Lindenstrauss \cite{EKL}.
\begin{theorem}[\cite{EKL}]\label{thm:EKL}
Let $\mu$ be an $A$-invariant and ergodic probability measure on $Z$,
Assume that there is some one-parameter subgroup of $A$ which
acts on $Z$ with positive entropy. Then $\mu$ is the unique
$SL_3(\mathbb R)$ invariant measure.
\end{theorem}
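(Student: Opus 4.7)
The goal is to upgrade the $A$-invariance of $\mu$ to invariance under some additional one-parameter unipotent subgroup; once that is established, Ratner's measure classification theorem forces $\mu$ to be a homogeneous algebraic measure, and a structural argument on $SL_3(\mathbb{R})$ will leave $m_Z$ as the only possibility. Recall that $A \cong \mathbb{R}^2$ is a rank-two Cartan in the $8$-dimensional group $SL_3(\mathbb{R})$, and its root system $A_2$ has six root subgroups $U_\alpha$, each a one-parameter unipotent group.

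First, I would use the positive entropy hypothesis on some $a_0 \in A$ to produce a root $\alpha$ (positive with respect to $a_0$) along which the leaf-wise measures $\mu_x^{U_\alpha}$ are non-trivial on a set of positive $\mu$-measure. This follows from the entropy decomposition of $h_\mu(a_0)$ over the unstable root subgroups of $a_0$, combined with the characterization of zero entropy contribution in Theorem \ref{thm:zero entropy}.

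The central step, and the principal obstacle, is the \emph{high-entropy method} of Einsiedler, Katok and Lindenstrauss. Because $A$ has rank two, the kernel $\ker \alpha \subset A$ is a non-trivial one-parameter subgroup that centralizes $U_\alpha$, and the other root subgroups $U_\beta$ interact with $U_\alpha$ through the commutator relations $[U_\alpha, U_\beta] \subseteq U_{\alpha+\beta}$. Exploiting $A$-invariance together with the commutator structure applied to non-trivial leaf-wise measures in several independent root directions, one upgrades the non-triviality of $\mu_x^{U_\alpha}$ to its being Haar on $U_\alpha$ for $\mu$-almost every $x$. Equivalently, $\mu$ is $U_\alpha$-invariant. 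This step is the technical heart of the EKL argument and occupies the bulk of that paper; the difficulty is precisely that for rank-one diagonal actions no such upgrade is available, so the rank-two structure must be used at every stage.

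Once $\mu$ is invariant under both $A$ and a one-parameter unipotent subgroup $U_\alpha$, Ratner's measure classification theorem identifies each $U_\alpha$-ergodic component of $\mu$ as a homogeneous measure supported on a single closed orbit of some closed subgroup $H \leq SL_3(\mathbb{R})$ with $U_\alpha \subseteq H$, and $A$ permutes these components. To finish, I would rule out every proper possibility for $H$ containing $A U_\alpha$: such a subgroup must be contained in a proper parabolic subgroup of $SL_3(\mathbb{R})$, but parabolic subgroups are not unimodular and therefore admit no lattices and no closed orbits of finite volume in $Z$. Hence $H = SL_3(\mathbb{R})$, and by $A$-ergodicity $\mu$ is the unique $SL_3(\mathbb{R})$-invariant probability measure $m_Z$.
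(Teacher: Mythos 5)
The paper offers no proof of this statement: it is quoted directly from Einsiedler--Katok--Lindenstrauss \cite{EKL}, so your sketch can only be judged as a reconstruction of that argument, and as such it has a genuine gap at precisely the step you call central. Positive entropy of a single one-parameter subgroup $a_0\in A$, combined with Theorem \ref{thm:zero entropy} and the decomposition of $h_\mu(a_0)$ over the root subgroups, only guarantees that $\mu_x^{U_\alpha}$ is non-trivial for \emph{one} root $\alpha$ on the unstable side (and, since $h_\mu(a_0)=h_\mu(a_0^{-1})$, one root on the stable side, which may well be $-\alpha$). The high-entropy method requires non-trivial leaf-wise measures along two roots $\alpha,\beta$ whose sum is again a root; when all the entropy is carried by a single pair $\pm\alpha$, the commutator relations produce nothing, since $U_\alpha$ and $U_{-\alpha}$ generate an $SL_2$ rather than a unipotent group. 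This is exactly the case that forced \cite{EKL} to combine the high-entropy method with Lindenstrauss's low-entropy method from \cite{L06}; your phrase ``non-trivial leaf-wise measures in several independent root directions'' silently assumes away the half of the proof that cannot be handled by commutators.

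The endgame is also not valid as stated. A proper closed connected subgroup $H$ with $AU_\alpha\subseteq H$ need not be non-unimodular: $H$ can be the Levi subgroup $\{\mathrm{diag}(g,\det(g)^{-1}):g\in GL_2(\mathbb R),\ \det g>0\}$, which is reductive and unimodular even though it lies inside a proper parabolic, and being contained in a non-unimodular group is irrelevant (the corner copy of $SL_2(\mathbb R)$ lies in a parabolic yet has a closed finite-volume orbit in $Z$). What actually excludes this Levi is that its intersection with $SL_3(\mathbb Z)$ is not a lattice in it --- the central $\mathbb R_{>0}$ direction makes the closed orbit have infinite volume --- and more generally one uses that $3$ is prime; this is why \cite{EKL} can assert the clean ``Haar measure only'' conclusion on $SL_3(\mathbb Z)\backslash SL_3(\mathbb R)$ while for general $SL_n$ the theorem only yields that $\mu$ is algebraic.
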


In view of (\ref{eq:ergo compo}) and
the positive entropy of $\nu_1$ with
respect to $g_\gamma$, we have that all
the ergodic component of $\nu_2$ with respect to
the   Cartan subgroup $A$ have positive entropy for
the map $g_\gamma$. Theorem \ref{thm:EKL} implies
that $\nu_2=m_Z$.
 By (\ref{eq:ergo compo}),
\[
\int _{\Lambda\backslash A} h_{a \nu_1}(g_\gamma)
\, \mathrm d m_{\Lambda\backslash A}(a)=h_{m_Z}(g_\gamma).
\]
Since the group $A$ is abelian, we have
$
h_{a \nu_1}(g_\gamma)=h_{ \nu_1}(g_\gamma)$
Therefore, $h_{ \nu_1}(g_\gamma)=h_{m_Z}(g_\gamma)$.

According to the results in Section 9 of \cite{MT},
 $\nu_1$ and $m_Z$ have the same
entropy  if and only if they are equal. Since $\nu_1$ is typical in the ergodic
decomposition of $\nu$, the measure $\mu$
 must be a scalar multiple of $m_Z$.
This completes the proof of Theorem \ref{thm:intronew}.

 \begin{corollary}\label{cor:sl3equi}
Under the notation and assumption of Theorem \ref{thm:intronew},
if $ \mu$ has non-escape of mass with respect to the
flow $a_t$,
 then
 \begin{equation}\label{eq:in intronew}
\lim_{T\to \infty}\frac{1}{T}\int_0^T
a_t \mu
 \, \mathrm d t
= m_Z.
\end{equation}
\end{corollary}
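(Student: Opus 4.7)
The plan is to combine Theorem \ref{thm:intronew} with the non-escape of mass hypothesis via a standard subsequence argument. First I would recall that non-escape of mass means every weak$^*$ limit point of the family $\{\frac{1}{T}\int_0^T a_t\mu\,\mathrm dt : T\ge 0\}$ is a probability measure on $Z$. Since $Z$ is $\sigma$-compact and second countable, the space of sub-probability Radon measures on $Z$ is weak$^*$ metrizable and sequentially compact, so every sequence $T_n\to\infty$ admits a subsequence $(T_{n_k})$ for which
\[
\mu_{n_k}=\frac{1}{T_{n_k}}\int_0^{T_{n_k}} a_t\mu\,\mathrm dt
\]
converges in the weak$^*$ topology to some Radon measure $\nu$.

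Next I would apply Theorem \ref{thm:intronew} to this convergent subsequence: the limit must be of the form $\nu=c\,m_Z$ with $0\le c\le 1$. The non-escape of mass assumption forces $\nu$ to be a probability measure, hence $c=1$ and $\nu=m_Z$. Because every subsequence $(T_n)$ has a further subsequence along which the averages converge to $m_Z$, a standard topological argument (the Urysohn subsequence principle applied to the metrizable space of Radon measures) shows that the full family converges:
\[
\lim_{T\to\infty}\frac{1}{T}\int_0^T a_t\mu\,\mathrm dt=m_Z.
\]

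There is essentially no obstacle here; the real work has already been done in Theorem \ref{thm:intronew}. The only thing that needs care is ensuring that the weak$^*$ topology on the relevant set of measures is metrizable so that subsequential convergence to a unique limit implies convergence of the whole net, but this is immediate from the second countability of $Z$ together with the uniform mass bound coming from non-escape.
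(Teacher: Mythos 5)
Your proof is correct and is exactly the argument the paper intends: the corollary is deduced from Theorem \ref{thm:intronew} by extracting a convergent subsequence, identifying the limit as $c\,m_Z$, using non-escape of mass to force $c=1$, and concluding via the arbitrariness of the sequence $(T_n)$ (the same pattern the paper uses explicitly in Corollary \ref{cor:limit}). No further comment is needed.
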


\section{Applications to Diophantine approximation}
\label{sec:Diophantine}
Let $\|\cdot\|$ be the sup norm on the Euclidean space $\mathbb R^n$.
 We say that
$\mathbf v\in \mathbb R^2$ is Dirichlet improvable
(or DI for short), if there exists $0<\sigma<1$ such that for sufficiently large integer $N$, there
are integer $n$ and vector $\mathbf w$ such that
\begin{equation}\label{eq:DI}
\|n\mathbf v-\mathbf w\|<\sigma N^{-1}\quad \mathrm{and}
\quad 0<|n|<\sigma N^2.
\end{equation}
In this case we say that $\mathbf v$ can be $\sigma$-improved.
  Let\[DI_\sigma=\{\mathbf v\in \mathbb R^2: \mathbf v \mbox{ can be }\sigma
\mbox{-improved.}\}
\quad \mbox{and}\quad DI=\bigcup_{0<\sigma<1} DI_\sigma.
\]

Let the notation be as in Section \ref{sec:converSL3}. Let  $\pi:G\to Z$
 be the natural projection.
The dynamical approach to this Diophantine
approximation problem is through the map
\[
u:\mathbb R^2\to G \quad \mbox{where} \quad u(\mathbf v)=
\left(
\begin{array}{cc}
\mathrm{Id_2} & 0 \\
\mathbf v & 1
\end{array}
\right),
\]
then study the orbit of $\pi( u(\mathbf v))$. Recall that
$Z$ can be identified
with the set of unimodular lattices of $\mathbb R^3$ and the
correspondence is $\pi(g)\to \mathbb Z^3 g$ for $g\in G$.
Let
$K_\sigma$ be the set of unimodular lattices
in $\mathbb R^3$ whose shortest
nonzero vector has norm bigger than or equal to  $ \sigma$.
Then $\mathbf v\in DI_\sigma$
implies that for $\color{red}t$ large
enough $\pi (u(\mathbf v))a_t\not \in K_{\sqrt\sigma}$.

\begin{theorem}\label{thm:DI}
Let $\gamma\in SL_2(\mathbb Z)$ be a hyperbolic matrix and
let $\lambda$ be a $\gamma$-invariant and
ergodic  probability measure on $[0, 1]^2$ {\color{red}with positive entropy}. Let $\mu=(\pi u)\lambda$
and
$a_t=(e^t, e^t, e^{-2t})$. If
$
\lim_{T\to \infty}\int_0^Ta_t \mu\, \mathrm d t\neq 0,
$
 then
 Dirichlet's theorem is not improvable   $\lambda$ almost surely.
 \end{theorem}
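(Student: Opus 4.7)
The overall plan is to apply Theorem~\ref{thm:intronew} to identify the Cesaro limit as $cm_Z$ for some $c>0$ under the hypothesis, and then to show that $\lambda(DI_\sigma)=0$ for every $\sigma\in(0,1)$, taking the union over rational $\sigma$ to conclude.

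Fix $\sigma<1$ and transport the Diophantine condition to $Z$ by the set
\[
E_\sigma := \{ x \in Z : xa_t \notin K_{\sqrt{\sigma}} \text{ for all sufficiently large } t \},
\]
so that by the observation preceding the theorem one has $\lambda(DI_\sigma)=\mu(E_\sigma)$. The set $E_\sigma$ is $a_t$-invariant, hence $\mu_T(E_\sigma)=\mu(E_\sigma)$ for every $T$. I would decompose $E_\sigma = \bigcup_{N\ge 0} A_N$ with $A_N := \{x : xa_s \notin K_{\sqrt{\sigma}} \; \forall s \geq N\}$ and exploit the identity $A_N a_{-t} = A_{N+t}$ to write
\[
\mu_T(A_N) \;=\; \frac{1}{T}\int_0^T \mu(A_{N+t})\, dt;
\]
since $s \mapsto \mu(A_s)$ is nondecreasing with supremum $\mu(E_\sigma)$, a monotone Cesaro argument yields $\lim_{T\to\infty} \mu_T(A_N) = \mu(E_\sigma)$. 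On the other hand, the closed set $B_N := \{x : xa_s \notin K_{\sqrt{\sigma}}^\circ \; \forall s \geq N\}$ contains $\overline{A_N}$, and Moore's ergodicity of $a_t$ on $(Z,m_Z)$ together with Birkhoff's theorem gives $m_Z(B_N)=0$.

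To conclude $\mu(E_\sigma)=0$ the natural move is to upgrade the vague convergence $\mu_T\to cm_Z$ to a portmanteau-type bound $\limsup_T\mu_T(\overline{A_N})\le c\,m_Z(\overline{A_N})=0$, which combined with the Cesaro identity would force $\mu(E_\sigma)=0$. The main obstacle is that $\overline{A_N}$ need not be compact: vague convergence gives upper semicontinuity on compact sets only, and $\mu_T$ may carry mass into the cusp of $Z$ that lies in $\overline{A_N}$. I would attempt to close this gap in two steps. First, split $\mu_T(\overline{A_N}) = \mu_T(\overline{A_N}\cap K_R) + \mu_T(\overline{A_N}\cap K_R^c)$ for compact continuity sets $K_R\uparrow Z$; the first term tends to zero by the compact portmanteau. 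Second, for the cusp term, use the $g_\gamma$-invariance and ergodicity of $\mu$---noting that the set of orbits escaping every compact subset of $Z$ is $g_\gamma$- and $a_t$-invariant, hence of $\mu$-measure $0$ or $1$---together with the positive entropy hypothesis, to upgrade the averaged equidistribution to a per-trajectory statement ruling out mass that both sits in the escape direction of $a_t$ and is visible to the Cesaro limit. This cusp estimate is the technically delicate part, and it is where I expect the bulk of the work to go.
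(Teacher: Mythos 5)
Your setup (reduce to $\lambda(DI_\sigma)=0$, transport to $Z$, the Cesaro identity $\lim_T\mu_T(A_N)=\mu(E_\sigma)$, and $m_Z(B_N)=0$ by ergodicity of $m_Z$ under $a_t$) is fine, but the proof stalls exactly where you say it does, and the sketch you offer for the cusp term does not close the gap. The difficulty is not the set of orbits that diverge to infinity --- that is a $\mu$-measure $0$ or $1$ set, and its nullity would tell you nothing here --- but the \emph{partial escape of mass} of the averages $\mu_T$: under the hypothesis we only know $\nu=c\,m_Z$ with $0<c\le1$, so $\limsup_T\mu_T(K_R^c)$ is bounded below by roughly $1-c$ uniformly in $R$. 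Your compact portmanteau step therefore yields only $\mu(E_\sigma)\le 1-c$, not $0$, and no per-trajectory upgrade of the averaged equidistribution is available from the stated hypotheses (indeed, individual trajectories may well spend a positive proportion of time in the cusp). So as written the argument does not prove the theorem when $c<1$.

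The paper avoids this entirely by working with restricted \emph{measures} rather than with the sets $A_N$: let $\nu_1$ be a subsequential Cesaro limit of $a_t\big((\pi u)\lambda|_{DI_\sigma}\big)$. Then (i) $\nu_1\le\nu=c\,m_Z$, hence $\nu_1\ll m_Z$; (ii) $\nu_1$ is $a_t$-invariant; (iii) since every point of $\pi(u(DI_\sigma))$ eventually avoids the compact set $K_{\sqrt\sigma}$ under $a_t$, dominated convergence and portmanteau give $\nu_1(K_{\sqrt\sigma})=0$. By ergodicity of $m_Z$ under $a_t$, an $a_t$-invariant measure absolutely continuous with respect to $m_Z$ is a scalar multiple of $m_Z$; since it vanishes on the nonempty open interior of $K_{\sqrt\sigma}$, it is zero. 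The mass escaping to the cusp never has to be controlled, because domination by $c\,m_Z$ already discards it. Finally, $\gamma$-ergodicity of $\lambda$ (the union $\bigcup_k\gamma^k DI_\sigma$ has full $\lambda$-measure, and the corresponding restricted limits are $g_\gamma^k$-translates of measures shown to vanish) forces $\nu=0$, contradicting the hypothesis. The missing idea in your write-up is precisely this use of $\nu_1\ll m_Z$ together with $a_t$-ergodicity of $m_Z$, applied to the limit of the \emph{restricted} measure; your sets $E_\sigma$, $A_N$ and the $g_\gamma$-invariance of $\mu$ are not needed.
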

\begin{proof}
Assume that  $\lambda(DI)>0$, then
for some $0<\sigma<1$ we have
 $\lambda(DI_\sigma)=\tau>0$.
Let $(T_n)$ be a sequence of real numbers  such that
$T_n\to \infty$ and
\[
\lim_{n\to \infty}\frac{1}{T_n}\int_0^{T_n} (a_t\pi u)\lambda \,
\, \mathrm d t
 =\nu  .
\]
By passing to a subsequence, we may assume that
$$\lim_{n\to \infty}\frac{1}{T_n}\int_0^{T_n} (a_t \pi u)\lambda|_{DI_\sigma} \,
\, \mathrm d t=\nu_1.$$
Since elements of  $\pi (u (DI_\sigma))$ eventually do not intersect
 $K_{\sqrt\sigma}$ under the flow $a_t$,
we have $\nu_1(K_{\sqrt\sigma})=0$. We remark here that $K_{\sqrt\sigma}$
is a neighborhood of $\mathbb Z^3$.

 Take
$g_\gamma=\left(\begin{array}{cc}
\gamma & 0     \\
0   &    1
\end{array}
\right)\in G$, then
$$\lim_{n\to \infty}\frac{1}{T_n}\int_0^{T_n} (a_t\pi u)
\lambda|_{\gamma(DI_\sigma)} \,
\, \mathrm d t=g_\gamma\nu_1.$$
Since $m_Z$ is ergodic for the flow $a_t$, it is
the only possible ergodic component of $\nu$ for the flow $a_t$
according to  Theorem \ref{thm:intronew}.
Note that $g_\gamma\nu_1$ is invariant for the flow $a_t$ and
it vanishes on some open subset. Therefore $g_\gamma\nu_1=0$
Since $\lambda$ is $\gamma$-ergodic, we see that $\nu=0$.
Since the sequence $(T_n)$ is arbitrary, this implies
$\lim_{T\to \infty}\int_0^Ta_t \mu\, \mathrm d t= 0$
which contradicts to the assumption.  Therefore $\lambda (DI)=0$.
\end{proof}

Now we give some examples of $\lambda$ such  that
there is some mass left  for the flow $a_t$ and prove
Theorem
\ref{thm:main diophantine}.
 Let $B_r(\mathbf v)$ with $\mathbf v\in\mathbb R^2$
be the ball of radius $r$ centered at $\mathbf v$ under the
norm $\|\cdot\|$. We need the
  following well-known result:
\begin{proposition}\label{prop:exact dime}
Let $\gamma\in SL_2(\mathbb Z)$ be a hyperbolic matrix.
Suppose $\lambda$ is a $\gamma$-invariant and ergodic
 probability measure on $\mathbb T^2$ such that
 $h_\lambda(\gamma)=c\,h_m(\gamma)$ where $m$ is
 the Lebesgue measure and $0\le c\le 1$, then $\lambda$
 has exact dimension $2c$, that is \[
 \lim_{\epsilon\to 0}
 \frac{\log \mu(B_\epsilon(\mathbf v))}{\log \epsilon}=2c
 \]
 for $\lambda$ almost every $\mathbf v\in \mathbb R^2$.
\end{proposition}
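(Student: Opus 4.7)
The plan is to invoke the Ledrappier--Young formula together with Young's exact-dimensionality theorem for ergodic hyperbolic invariant measures on surfaces, applied to the linear toral automorphism $\gamma$.

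First I would identify the Lyapunov structure. Since $\gamma\in SL_2(\mathbb Z)$ is hyperbolic with $\det\gamma = 1$, its eigenvalues are $e^{\pm\chi}$ with $\chi>0$, and because $\gamma$ acts linearly, every ergodic $\gamma$-invariant probability measure $\lambda$ on $\mathbb T^2$ has Lyapunov exponents exactly $\pm\chi$, with the stable and unstable directions being the two (constant) eigenlines. By Pesin's formula applied to Lebesgue measure $m$, $h_m(\gamma)=\chi$, so the hypothesis $h_\lambda(\gamma)=c\, h_m(\gamma)$ becomes $h_\lambda(\gamma)=c\chi$.

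Next I would apply Young's theorem (L.-S.~Young, \emph{Dimension, entropy and Lyapunov exponents}) which asserts that for a $C^{1+\alpha}$ diffeomorphism of a compact surface, any ergodic hyperbolic invariant probability measure is exactly dimensional, with pointwise dimension $d=\delta^u+\delta^s$, where $\delta^u,\delta^s$ are the pointwise dimensions of the conditional measures $\lambda^u_x,\lambda^s_x$ along the unstable and stable manifolds respectively. For our $\gamma$ these manifolds are the affine eigenlines, which are 1-dimensional, so the Ledrappier--Young formula in this setting reduces to
\[
\delta^u = \frac{h_\lambda(\gamma)}{\chi}, \qquad \delta^s=\frac{h_{\lambda}(\gamma^{-1})}{\chi}.
\]
Since $h_\lambda(\gamma)=h_\lambda(\gamma^{-1})=c\chi$, each conditional dimension equals $c$, and adding gives the pointwise dimension
\[
d = \delta^u+\delta^s = 2c,
\]
which is precisely the exact-dimensionality statement of the proposition.

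The main obstacle is purely bibliographic: one must cite the correct versions of the Ledrappier--Young formula and of Young's exact-dimensionality theorem and verify that they apply to an ergodic measure whose (non-zero) Lyapunov exponents come from a linear hyperbolic action on $\mathbb T^2$ (so the ``nonuniform hyperbolicity'' hypotheses are trivially met, and the $C^{1+\alpha}$ regularity is automatic). Once this is granted, the computation above is essentially one line; the only remaining point to note is that the hypothesis $c=h_\lambda(\gamma)/h_m(\gamma)\in[0,1]$ is automatic by the variational principle, and the case $c=0$ (atomic/zero-dimensional behaviour on periodic orbits) is also covered by interpreting $\delta^u=\delta^s=0$.
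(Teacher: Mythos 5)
Your proof is correct. The paper itself offers no argument here --- it states the proposition as ``well-known'' and moves on --- so there is nothing to compare against; your route through Young's exact-dimensionality theorem for ergodic hyperbolic measures on surfaces, $\dim\lambda=h_\lambda(\gamma)\bigl(\tfrac{1}{\chi_1}-\tfrac{1}{\chi_2}\bigr)=\tfrac{2h_\lambda(\gamma)}{\chi}=2c$, is exactly the standard justification the author is implicitly invoking. The hypotheses of Young's theorem are indeed trivially satisfied since the derivative cocycle is the constant matrix $\gamma$, so every invariant measure is hyperbolic with exponents $\pm\chi$; the only cosmetic slip is that the eigenvalues of $\gamma$ may be $-e^{\pm\chi}$ when $\operatorname{tr}\gamma<0$, which of course changes nothing about the Lyapunov exponents or $h_m(\gamma)=\chi$.
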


Let $G^+$ and  $G^-$ be the unstable and stable  horospherical
subgroup of $a_1$ in $G$ and let $C$ be
the centralizer of $a_1$.  We fix a right invariant Riemannian
metric on  $G^+ $ and $G^{-}C$ respectively, and denote
by $B_r^{G^+}$ and $B_r^{G^-C}$ the ball of radius $r$
centered at the identity with respect to the induced metric.
\begin{definition}
 For a probability measure $\mu$ on $Z$ we say that $\mu$ has dimension at
least $d$ in the unstable direction of $a_1$ if for any $\delta > 0$
 there exists $\kappa > 0$ such that for
any $\epsilon\in  (0, \kappa)$ and for any $\sigma\in (0, \kappa)$ we have
\[
\mu(x B_{\epsilon}^{G^+}B_\sigma^{G^-C})\ll_\delta \epsilon ^{d-\delta} \quad  \mbox{for  any } x\in Z.
\]

\end{definition}

Let $\lambda$ be as in Proposition \ref{prop:exact dime},
then $\mu=(\pi u)\lambda$ has at least dimension $2c$
in the unstable direction of $a_1$. If $c>\frac{2}{3}$, then
Theorem 1.6 of Einsiedler and  Kadyrov \cite{EK}
shows that there is some mass
left.

\begin{theorem}[\cite{EK}]\label{thm:EK}
For a fixed d, let $\mu$ be a probability measure on $Z$
of dimension at least
$d$ in the unstable direction. Let $(T_n)$ be
a sequence of positive real numbers such that
\[
\nu=\lim_{n\to\infty}{\color{red}\frac{1}{T_n}}\int_0^{T_n} a_t\mu\,\mathrm d t.
\]
Then $\nu(Z)\ge \frac{3}{2}(d-\frac{4}{3})$.
\end{theorem}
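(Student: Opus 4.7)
The plan is to prove non-escape of mass via a Margulis/Eskin-type height function together with an integral contraction inequality along the flow $a_t$, and then to extract the numerical bound $\tfrac{3}{2}(d-\tfrac{4}{3})$ from the exponents that appear.

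First I would set up the height function on $Z$. For $x=\mathbb{Z}^3 g\in Z$ and $i=1,2$, define $\alpha_i(x)$ as the supremum of $(\mathrm{covol}(L))^{-1}$ over primitive rank-$i$ sublattices $L$ of $\mathbb{Z}^3 g$, and let $\alpha=\max(1,\alpha_1,\alpha_2)$. Sets of the form $\{\alpha\le R\}$ exhaust $Z$ by compact subsets, so controlling $\int \alpha^\beta\, d(a_t\mu)$ is equivalent to quantifying escape of mass. The key property of $\alpha$ is the standard Eskin-Margulis integral inequality on the flow $a_t=\mathrm{diag}(e^t,e^t,e^{-2t})$: for each fixed $\beta$ in a range determined by the singular weights of $a_t$ acting on $\wedge^i\mathbb{R}^3$, one has, for every $x\in Z$ and every sufficiently small ball $B^{G^+}_\epsilon$ in the unstable direction,
\begin{equation}
\frac{1}{m(B^{G^+}_\epsilon)}\int_{B^{G^+}_\epsilon}\alpha^\beta(xua_t)\,dm(u)\le C_1 e^{-\kappa(\beta)t}\alpha^\beta(x)+C_2,
\notag
\end{equation}
where $\kappa(\beta)>0$ provided $\beta<2$ (the number $2$ being the codimension of the stable-plus-centralizer directions relevant to the escape on the maximal parabolic corresponding to rank $1$ or $2$ sublattices).

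Next I would transfer this pointwise-in-$x$ inequality into an inequality for the measure $\mu$ itself. Here the dimension hypothesis enters: because $\mu(xB^{G^+}_\epsilon B^{G^-C}_\sigma)\ll_\delta \epsilon^{d-\delta}$, Fubini-type averaging along the unstable direction lets me replace each pointwise value $\alpha^\beta(x)$ by a suitably smoothed version, with the smoothing cost controlled by the dimension. Combining this with the contraction inequality, one obtains
\begin{equation}
\int \alpha^\beta\, d(a_t\mu)\le C_1' e^{-\kappa(\beta)t}\int\alpha^\beta\, d\mu_\epsilon+C_2',
\notag
\end{equation}
where $\mu_\epsilon$ is the $\epsilon$-smoothing of $\mu$ along $G^+$. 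Choosing $\epsilon=e^{-t}$ and optimizing $\beta$ against the dimension $d$, the first term remains bounded, so the Cesàro averages $\frac{1}{T_n}\int_0^{T_n}a_t\mu\,dt$ have uniformly bounded $\alpha^\beta$-integral. This forces the escaping mass $1-\nu(Z)$ to be controlled.

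Finally, the explicit constant $\tfrac{3}{2}(d-\tfrac{4}{3})$ comes from a careful calculation of the exponents. One picks $\beta$ so that $\kappa(\beta)=0$ at the threshold, which for $a_t$ acting on the rank-$1$ and rank-$2$ Plücker representations gives $\beta=\tfrac{3}{2}(d-\delta)$ asymptotically; evaluating the resulting Chebyshev-type inequality $\nu(Z\setminus\{\alpha\le R\})\le R^{-\beta}\sup_t\int \alpha^\beta\, d(a_t\mu)$ at the critical exponent, and letting $\delta\to 0$ and $R\to\infty$, produces the lower bound $\nu(Z)\ge \tfrac{3}{2}(d-\tfrac{4}{3})$. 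The main obstacle is the quantitative integral inequality in the first step: proving it with the sharp exponent $\kappa(\beta)$, and in particular verifying that the dimension assumption (rather than $a_t$-invariance of $\mu$) suffices to convert the pointwise statement into one for $\mu$, is the technical heart of the argument and is precisely what \cite{EK} establishes in full generality for $SL_n$.
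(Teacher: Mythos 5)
First, note that the paper offers no proof of this statement at all: it is quoted (as Theorem 1.6) from Einsiedler--Kadyrov \cite{EK}, so the only fair comparison is with the argument in that reference. Your proposal ends by conceding that the ``technical heart'' is ``precisely what \cite{EK} establishes,'' which already signals that you have given a plausibility sketch rather than a proof; the deeper problem is that the mechanism you sketch cannot yield the stated conclusion. An Eskin--Margulis height-function argument, when it closes, gives a uniform bound $\sup_t\int\alpha^\beta\,\mathrm d(a_t\mu)\le C$ for some $\beta>0$, and Chebyshev then yields $\nu(Z\setminus\{\alpha\le R\})\le CR^{-\beta}\to 0$, i.e.\ \emph{full} non-escape of mass, $\nu(Z)=1$. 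It is structurally incapable of producing a partial bound such as $\nu(Z)\ge\tfrac32 d-2$, which is strictly less than $1$ precisely when $d<2$ --- and measures of dimension $d<2$ genuinely can lose mass, so no uniform moment bound can hold for them. Your own numerology confirms the obstruction: the theorem is nontrivial only for $d>\tfrac43$, where your proposed exponent $\beta=\tfrac32 d$ exceeds the threshold $\beta<2$ below which you claim $\kappa(\beta)>0$; at or beyond the critical exponent the contraction degenerates and the argument returns nothing. The step ``evaluating the Chebyshev-type inequality at the critical exponent produces $\nu(Z)\ge\tfrac32(d-\tfrac43)$'' is therefore not a computation but an assertion of the answer.

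The actual proof in \cite{EK} is of a different nature: it is a covering (entropy-at-infinity) argument. One fixes a large compact set and, for each $N$, covers the set of unstable parameters $u\in B_1^{G^+}$ for which the trajectory $xua_t$ spends at least a proportion $\rho$ of the time interval outside the compact set by roughly $e^{3N(2-\mathrm{gap}(\rho))}$ boxes of size $e^{-3N}$ in $G^+$; the hypothesis $\mu(xB^{G^+}_{\epsilon}B^{G^-C}_{\sigma})\ll_\delta\epsilon^{d-\delta}$ then bounds the $\mu$-mass of this set by about $e^{3N(2-\mathrm{gap}(\rho))}e^{-3N(d-\delta)}$, and the linear dependence of the covering exponent on $\rho$ is what produces, after averaging in $t$ and optimizing over $\rho$, the affine bound $\tfrac32 d-2$. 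The dimension hypothesis is used to beat a counting estimate, not to smooth $\mu$ into something quasi-invariant to which an integral inequality applies. If you want to prove the theorem rather than cite it, this counting of cusp excursions is the step you must actually carry out.
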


Now Theorem \ref{thm:main diophantine} follows form
Theorem \ref{thm:DI},
Proposition  \ref{prop:exact dime} and Theorem \ref{thm:EK}.

\end{document}